\documentclass[11pt]{article}

\usepackage[a4paper,vmargin={3.5cm,3.5cm},hmargin={2.5cm,2.5cm}]{geometry}
\usepackage[font=sf, labelfont={sf,bf}, margin=1cm]{caption}
\usepackage{dsfont,amsmath,amsfonts,amsthm,amssymb}

\usepackage[toc,page]{appendix} 
\usepackage{graphicx}
\usepackage{graphics}
\usepackage{stmaryrd}
\usepackage{verbatim}
\usepackage{hyperref}
\usepackage{color}
\usepackage{bbm}
\usepackage[T1]{fontenc}
\usepackage[cyr]{aeguill}
\date{}

\title{\bf{\textsc{Local explosion in self-similar \\growth-fragmentation processes}}}

\author{Jean Bertoin\thanks{ Institut f\"ur Mathematik, 
Universit\"at Z\"urich, 
Winterthurerstrasse 190, 
CH-8057 Z\"urich, Switzerland. \hfill \eject
Email: jean.bertoin@math.uzh.ch}  \quad \& \hspace{0.2cm}Robin Stephenson\thanks{Institut f\"ur Mathematik, 
Universit\"at Z\"urich, 
Winterthurerstrasse 190, 
CH-8057 Z\"urich, Switzerland. \hfill \eject E-mail: robin.stephenson@normalesup.org}}

\begin{document}

\maketitle

\newtheorem{theo}{Theorem}[section]
\newtheorem{lemma}{Lemma}[section]
\newtheorem{prop}{Proposition}[section]
\newtheorem{cor}{Corollary}[section]
\newtheorem{defi}{Definition}[section]
\newtheorem{rem}{Remark}
\numberwithin{equation}{section}
\newcommand{\e}{{\mathrm e}}
\newcommand{\rep}{{\mathrm{rep}}}
\newcommand{\R}{{\mathbb{R}}}
\newcommand{\C}{{\mathbb{C}}}
\newcommand{\T}{\mathbf{T}}
\newcommand{\E}{\mathbb{E}}
\renewcommand{\L}{\mathbf{L}}
\newcommand{\TT}{\mathbb{T}}
\newcommand{\s}{\mathcal{S}^{\downarrow}}
\newcommand{\p}{\mathcal{P}}
\newcommand{\N}{\mathbb{N}}
\newcommand{\D}{\mathcal{D}}
\newcommand{\W}{\mathcal{W}}
\newcommand{\pr}{\mathbb{P}}
\newcommand{\Q}{\mathbb{Q}}
\newcommand{\z}{\mathbf{\zeta}}
\newcommand{\m}{\mathbf{\mu}}
\newcommand{\M}{\mathcal{M}}
\newcommand{\Z}{\mathbb{Z}}
\newcommand{\dia}{\diamond}
\newcommand{\bul}{\bullet}
\renewcommand{\geq}{\geqslant}
\renewcommand{\leq}{\leqslant}
\newcommand{\veps}{\varepsilon}

\abstract{Markovian growth-fragmentation processes describe a family of particles which can grow larger or smaller with time, and occasionally split in a conservative manner. They were introduced in \cite{Bertoingrowthfrag}, where special attention was given to the \emph{self-similar} case. A Malthusian condition was notably given under which the process does not locally explode, in the sense that for all times, the masses of all the particles can be listed in non-increasing order. Our main result in this work states the converse: when this condition is not verified, then the growth-fragmentation process explodes almost surely. Our proof involves using the additive martingale to bias the probability measure and obtain a spine decomposition of the process, as well as properties of self-similar Markov processes.}

\medskip

\noindent \emph{\textbf{Keywords:}  Growth-fragmentation, self-similarity, branching process, spine decomposition}

\medskip

\noindent \emph{\textbf{AMS subject classifications:}} 60F17, 60G51, 60J25, 60J80
\section{Introduction}
Informally, a growth-fragmentation process can be viewed as a branching particle system, in which each particle has a mass that evolves continuously (and in particular may grow)  as time passes, independently of the other particles, and then splits in two. When a split occurs, mass is conserved in the sense that the sum of the masses of the two new particles is equal to that of the particle that just split. We may think for instance of growth-fragmentations as a model for cell division, see e.g. \cite{DHKR}. The process is further called self-similar when it fulfills a scaling property. 

More precisely, a self-similar growth-fragmentation
can be defined as follows, as was done in \cite{Bertoingrowthfrag}: start with a positive self-similar Markov process $X=(X(t),t\geq0)$ with no positive jumps, and such  that $X$ either  is absorbed at $0$ after a finite (random) time or converges to $0$  in infinite time. 
We look at $X(t)$ as the mass of a particle at time $t$ and whenever $X$ makes a (necessarily negative) jump, we consider this as giving birth to a new particle, whose original mass is equal to the size of the jump. The new particle then grows and splits just as the original one, in turn begetting new particles, and so on. Note that when the set of times at which a particle jumps (i.e. reproduces) is assumed to be  discrete, this description fits the framework \cite{jagers} of Crump-Mode-Jagers branching processes.

One of the main results established in \cite{Bertoingrowthfrag} is that there is a simple Malthusian condition, which is given in terms of the characteristics of the self-similar Markov process $X$, that ensures that a.s., for all times $t\geq 0$, the particles generated by the growth-fragmentation can be listed in the non-increasing order of their masses and then form a null sequence. In short, our purpose in the present work is to show that conversely, when this condition fails, the self-similar growth-fragmentation explodes, in the sense that for every non-empty open interval $I$ in $(0,\infty)$, there is a random time at which $I$ contains infinitely many particles. 

There is already a vast literature dealing with explosion for various types of branching processes; see in particular the recent works by Amini {\it et al.} \cite{ADGO} for age-dependent branching processes, by Komj\'athy  \cite{Komjathy} for Crump-Mode-Jagers processes, and the references listed therein. In this regard, growth-fragmentations should be considered as spatial branching processes, viewing the evolution of the mass of a particle  as a spatial displacement in the positive half-line. Plainly, the total number of particles may become infinite (which would be often referred to as an explosion in the literature) without inducing the \emph{local} explosion phenomenon in which we are interested here. Typically, this is the case when the system produces in finite time particles with arbitrarily small masses, but only finitely many particles with masses at least $\veps$ for every $\veps>0$.

The fact that self-similar growth-fragmentations may explode has been first pointed out in Section 3 of \cite{BW15} for a very specific set of parameters. 
Roughly speaking, the idea in  \cite{BW15} is that there is a natural genealogical line along which the mass of the particle reaches $0$ continuously in finite time; 
the new particles which branch off this specified line start very close to zero, and one just needs show that enough of them reach the target interval at approximately the same time.
Here, we shall follow the same general idea, but the generality of our statements means that additional work will be needed.

Our main theorem will be stated at the end of the next section, after some preliminaries about the construction of self-similar growth-fragmentations and the introduction of  relevant notation.
The three main ingredients for its proof are then developed in Section 3. As a first step, we analyze a truncation procedure and show that, without loss of generality, we may assume  that the intensity of birth events  is finite. Next, we dwell on changes of probability measures based on certain additive martingales and on the so-called spinal decomposition for homogeneous growth-fragmentations. The last ingredient consists in proving that under an appropriate hypothesis, self-similar growth-fragmentations may in some sense start from $0^+$ if the index of self-similarity $\alpha$ is strictly negative, and from $+\infty$ if $\alpha>0$. This is essentially a consequence of the fact that a similar property holds for certain positive self-similar Markov processes.
The proof of our main theorem will then be completed in the final section.

\section{Preliminaries, notation, and main result}

In order to make this model adapted to the upcoming proofs as well as better fitted with the theory of \cite{Bertoincompfrag}, we will consider a slightly more general version than that described informally in the introduction, where some jumps do not give birth to a new particle. To be precise, recall first that $X$ is characterized by an index of self-similarity $\alpha\in\R$ and the Laplace exponent $\Psi$ of a spectrally negative L\'evy process  given by
\[\Psi(q)=-k+\frac{1}{2} \sigma^2 q^2 + bq + \int_{(-\infty,0)}\left(\e^{qy}-1+q(1-\e^y)\right) \Lambda(\mathrm dy)\,,\qquad q\geq 0.\]
Here, $k\geq 0$ is the killing rate, $\sigma^2\geq 0$ the Gaussian coefficient, $b\in\R$ the drift coefficient, and $\Lambda$ the L\'evy measure which governs the rate of the jumps and fulfills $\int_{(-\infty,0)}(1\wedge y^2) \Lambda(\mathrm dy)<\infty$. $X$ can then be constructed as the Lamperti transform of aforementioned L\'evy process (see below for details). We implicitly assume that the killing rate $k$ is not $0$, or that the right-derivative of $\Psi$ satisfies $\dot \Psi(0^+)<0$, which is the necessary and sufficient condition for $X$ to be absorbed at $0$ after a finite time or to converges to $0$ in infinite time. 
We now suppose that $\Lambda$ is given as the sum of two measures,  $\Lambda=\Lambda_1+\Lambda_2$, so that the jumps of $X$ can be seen as coming from two independent Poisson point processes, one corresponding to $\Lambda_1$ and the other to $\Lambda_2$. We will say that a new particle arises from a jump of $X$ if this jump comes from the measure $\Lambda_1$, but not from $\Lambda_2$. 

In the next section, we give a precise construction of the growth-fragmentation process, first assuming that the total mass of $\Lambda_1$ is finite and then treating the general case. This construction is reminiscent of the ``branching L\'evy process" from \cite{Bertoincompfrag}. We then introduce some important notions and notation, and finally state our main theorem.

\subsection{A construction of growth-fragmentations  by truncation}\label{subsec:construction}

Let $\mathcal{U}=\cup_{n=0}^{\infty} \{0,1\}^n$ be the infinite binary tree. Elements of $\mathcal{U}$ are written as $\mathbf{u}=u_1u_2\ldots u_n$ where $u_i\in\{0,1\}$ and $n=|\mathbf{u}|\geq0$ is the generation of $\mathbf{u}$. As usual, the ancestor, that is the unique element at generation $0$, is the empty word $\emptyset$.  If $n>0$ then we let $\mathbf{u}^{-}=u_1\ldots u_{n-1}$ be the parent of $\mathbf{u}$.
We will build the growth-fragmentation  as the multiset (i.e. elements may be repeated and appear with their multiplicity) valued process 
$$\mathbf{X}(t)=\{\!\!\{X_{\mathbf{u}}(t):\;\mathbf{u}\in\mathcal{U}\text{ and }b_{\mathbf{u}}\leq t<d_{\mathbf{u}}\}\!\!\}, \qquad t\geq 0,$$
 where $X_{\mathbf{u}}$ is the size of the particle $\mathbf{u}$ at time $t$, and $b_{\mathbf{u}}$ and $d_{\mathbf{u}}$ are its birth and death times. 
 
 We start with the {\it homogeneous} case when the self-similarity index $\alpha$ is equal to $0$, and further assume that $\Lambda_1$ is a finite measure. The idea is that each particle, say $\mathbf{u}$, evolves as the exponential of a (possibly killed) L\'evy process up to an independent random time which has the exponential law with parameter $\Lambda_1((-\infty,0])$. If the particle is still alive at that time, then it splits. That is, the particle $\mathbf{u}$ then dies giving birth to two children. The left child $\mathbf{u}0$ gets a fraction $\e^{J}$ of the mass, where the distribution of $J$ is $\Lambda_1$ renormalized, and the right child $\mathbf{u}1$ has the complementary mass.

Specifically, for all $\mathbf{u}\in\mathcal{U}$, let $\xi_{\mathbf{u}}=(\xi_{\mathbf{u}}(t),0\leq t < \zeta_{\mathbf{u}})$ be a L\'evy process with Laplace exponent $\Psi_2$ defined by
\[\Psi_2(q)=-k+\frac{1}{2}\sigma^2q^2+\Big(b+\int_{(-\infty,0)}(1-\e^y)\Lambda_1(\mathrm dy)\Big) q+\int_{(-\infty,0)} (\e^{qy}-1+q(1-\e^y))\Lambda_2(\mathrm dy).\] (Note that the drift term has changed due to the compensation term for $\Lambda_1$ which is otherwise not taken into account.) The lifetime $\zeta_{\mathbf{u}}$ of $\xi_{\mathbf{u}}$ follows the exponential distribution with parameter $k$, and in particular $\zeta_{\mathbf{u}}=\infty$ a.s. if $k=0$. 
% It is convenient to agree that $\xi_{\mathbf{u}}(t)=-\infty$ for $t\geq \zeta_{\mathbf{u}}$, so that then $\exp(\xi_{\mathbf{u}}(t))=0$. 
Let as well $T_{\mathbf{u}}$ be an exponential random variable with parameter $\Lambda_1((-\infty,0])$ and $J_{\mathbf{u}}$ be a random variable with distribution $\frac{1}{\Lambda_1((-\infty,0])}\Lambda_1.$ We take all of these independent, and write $\pr$ for the law of the family of triples $\left(\xi_{\mathbf u}, T_{\mathbf u}, J_{\mathbf u}\right )_{{\mathbf u}
\in{\mathcal U}}$. 

Now we can build a homogeneous growth-fragmentation (which is also called a compensated fragmentation in \cite{Bertoincompfrag})  recursively on the generations, using the notation $\chi, \beta, \delta$ rather than $X,b,d$ for the sake of avoiding later on a possible confusion with the self-similar case $\alpha \neq 0$.
We first  let $\chi_{\emptyset}(t)=\exp(\xi_{\emptyset}(t))$ for $0=\beta_{\emptyset}\leq t < \delta_{\emptyset}= \zeta_{\emptyset}\wedge T_{\emptyset}$. 
Next  for $\mathbf{u}\neq \emptyset$, we write ${\mathbf v}={\mathbf u}^-$ for the parent of ${\mathbf u}$ and assume first that
 $\chi_{\mathbf{v}}(\delta_{\mathbf{v}}^-)>0$. That is $T_{\mathbf v}< \zeta_{\mathbf v}$ and the particle ${\mathbf v}$ is still alive at age $T_{\mathbf v}$. We then  let $\beta_{\mathbf{u}}=\delta_{\mathbf{v}}$ and $\chi_{\mathbf{u}}(\beta_{\mathbf{u}})=\chi_{\mathbf{v}}(\delta_{\mathbf{v}}^-)\e^{J_{\mathbf{v}}}$ if ${\mathbf{u}}={\mathbf{v}}0$ and $\chi_{\mathbf{u}}(\beta_{\mathbf{u}})=\chi_{\mathbf{v}}(\delta_{\mathbf{u}}^-)(1-\e^{J_{\mathbf{v}}})$ if ${\mathbf{u}}={\mathbf{v}}1$. Further we let $\delta_{\mathbf{u}}=\beta_{\mathbf{u}}+\zeta_{\mathbf{u}}\wedge T_{\mathbf{u}}$ and finally, for $t\in[\beta_{\mathbf{u}},\delta_{\mathbf{u}})$, we let $\chi_{\mathbf{u}}(t)=\chi_{\mathbf{u}}(\beta_{\mathbf{u}})\e^{\xi_{\mathbf{u}}(t-\beta_{\mathbf{u}})}$. On the other hand,  if $\chi_{\mathbf{v}}(\delta_{\mathbf{v}}^-)=0$, that is if the particle ${\mathbf v}$ already has died before reaching the age $T_{\mathbf v}$,
 then for definitiveness we let $\beta_{\mathbf u}= \delta_{\mathbf u}=\infty$ and agree that $\chi_{\mathbf u}(\delta_{\mathbf u}^-)= 0$.

For the general {\it self-similar} case $\alpha\neq 0$, the growth-fragmentation process is obtained by applying a standard Lamperti-type  time-change (see \cite{Lam72}) to the homogenous process $\boldsymbol{\chi}$  constructed in the above paragraph. We also introduce the following notation: if ${\mathbf{u}}\in\mathcal{U}$ and $t<\beta_{\mathbf{u}}$, we let $\bar \chi_{\mathbf{u}}(t)$ be equal to $\chi_{\mathbf{v}}(t)$, where $\mathbf{v}$ is the only ancestor of ${\mathbf{u}}$ to be alive at time $t$ in $\boldsymbol{\chi}$ . We also let
$$b_{\mathbf{u}} = \int_0^{\beta_{\mathbf{u}}} \bar \chi_{\mathbf{u}}(r)^{-\alpha}\mathrm{d}r, \quad 
d_{\mathbf{u}} = \int_0^{\delta_{\mathbf{u}}} \bar \chi_{\mathbf{u}}(r)^{-\alpha}\mathrm{d}r,$$
and 
$\tau_{\mathbf{u}}$ be the time-change defined by
	\[\tau_{\mathbf{u}}(t) = \inf \Big\{s\geq 0: \int_0^s \bar \chi_{\mathbf{u}}(r)^{-\alpha}\mathrm{d}r >t\Big\}.
\]
Finally, we let $X_{\mathbf{u}}(t)=\chi_{\mathbf{u}}(\tau_{\mathbf{u}}(t))$ for $b_{\mathbf{u}} \leq t < d_{\mathbf{u}}$. 
%Moreover, this time-change allows for infinitely many dislocations to happen in a finite time interval. We can similarly define $\tau_{\mathbf{u}}$ for $u\in\{0,1\}^{\N}$ and allow $X_{\mathbf{u}(t)}$ to be equal to $0$ or $infinity$. (BOF)

\begin{rem}\label{rem:eve} Consider for $t\geq0$ the left-most word $\mathbf{u}(t)=00\ldots 0$, where the number of zeroes is such that $b_{\mathbf{u}(t)}\leq t<d_{\mathbf{u}(t)}.$  Then the line of descent $\mathbf{u}(t)$ can be seen as the Eve of the growth-fragmentation process with the notations of \cite{Bertoingrowthfrag}, and the process $(X_{\mathbf{u}(t)}(t),t\geq0)$ is a positive self-similar Markov process with characteristics $(\Psi,\alpha)$, meaning that it is constructed from a spectrally negative L\'evy process with Laplace exponent $\Psi$ and the Lamperti transformation with parameter $\alpha$ that has been described above. 
\end{rem}
\begin{rem} We will sometime need to have a version of the process which starts with a particle of size $x\neq 1$. In this case, the construction is the same, except that we start with $X_{\emptyset}(0)=x$, that is $\xi_{\emptyset}(0)=\ln x$.  We call $\pr_x$ this distribution.
\end{rem}
%\begin{rem} Note that it is possible that this time-change makes some particles reach infinite size in finite time, as we will see later.
%\end{rem}

When the measure $\Lambda_1$ has infinite total mass, the above construction is not possible since there would be infinitely many branching events in a bounded time interval. Instead, we are going to use an approximation scheme to define $\mathbf{X}$. For $\veps>0$, we let 
$$\Lambda_1^{(\veps)}(\mathrm dx)=\mathbbm{1}_{\{x<-\veps\}}\Lambda_1(\mathrm dx), \hbox{ and }\Lambda_2^{(\veps)}(dx)=\Lambda_2(\mathrm dx)+\mathbbm{1}_{\{x\geq -\veps\}}\Lambda_1(\mathrm dx).$$
 The effect of this is that, when a particle splits in two, if the second child is too small (having a fraction smaller than $1-\e^{-\veps}$ of its parent's mass), then we ``erase" it, which we signify by shifting the corresponding part of $\Lambda_1$ into $\Lambda_2$. 

Plainly, $\Lambda_1^{(\veps)}$ is a finite measure, and we then let $\mathbf{X}^{(\veps)}$ be the self-similar growth-fragmentation processes with corresponding L\'evy measures $\Lambda_1^{(\veps)}$ and $\Lambda_2^{(\veps)}$. By standard properties of L\'evy processes (see Lemma 3 and Equation (19) in \cite{Bertoincompfrag}), the $\mathbf{X}^{(\veps)}$ can naturally be coupled such that the multisets
\[\mathbf{X}^{(\veps)}(t)=\{\!\!\{X^{(\veps)}_{\mathbf{u}}(t):\;\mathbf{u}\in\mathcal{U}\text{ and }b^{(\veps)}_{\mathbf{u}}\leq t<d^{(\veps)}_{\mathbf{u}}\}\!\!\}\]
are increasing as $\veps$ decreases. And thus we can define the general growth-fragmentation process $\mathbf{X}(t)$ as the increasing limit of $\mathbf{X}^{(\veps)}(t)$ as $\veps \to 0^+$.

\subsection{Cumulant and additive martingales}
For $q\geq 0$, we let \[
\kappa(q)=\Psi(q)+\int_{(-\infty,0)} (1-\e^y)^q\Lambda_1(\mathrm dy).\]

The function $\kappa$: $\R_+\to (-\infty,\infty]$ is convex, and takes finite values on $[2,\infty)$ at least. 
Note also that $\kappa(q)$ is finite for all $q\geq 0$ if and only if the measure $\Lambda_1$ is finite. The function $\kappa$ acts as cumulant generating function for the mean intensity of the \emph{homogeneous} growth-fragmentation process $\boldsymbol{\chi}$,  in the sense that we have, for all $t\geq0$ and $q>0$ such that $\kappa(q)<\infty$,
\begin{equation}\label{Emmo}
\E\Big[\sum_{{\mathbf u}\in{\mathcal U}} \chi^q_{\mathbf u}(t)\Big]=\e^{t\kappa(q)}.
\end{equation}
(We stress that in the sum above, it is  implicitly agreed that only the ${\mathbf u}\in{\mathcal U}$ with $\beta_{\mathbf u}\leq t < \delta_{\mathbf u}$ are taken into account.)
As a consequence, for $q$ such that $\kappa(q)<\infty$, the process $M_q=(M_q(t),t\geq0)$ defined by
\[M_q(t)=\e^{-t\kappa(q)}\sum_{{\mathbf u}\in{\mathcal U}} \chi^q_{\mathbf u}(t)\]
is a martingale, which we call an \emph{additive martingale}. This was introduced in \cite{Bertoincompfrag}; see in particular Theorem 1 and Corollary 3 there.

\subsection{Main result}
It is known from \cite{Bertoingrowthfrag} that, if $\alpha=0$ \emph{or} if $\kappa$ takes a non-positive value, then, almost surely,  for all $t\geq 0$ and $\veps>0$,  there are only finitely many particles in $(\veps, \infty)$,  or equivalently,  the population of $\mathbf{X}(t)$ can be ranked in non-increasing order. We will now establish the converse under a slight  technical condition on $\kappa$ that will be enforced throughout the rest of this work:
$$ \hbox{either $\kappa(0^+)=\infty$ or $\dot \kappa(0^+)<0$,}$$
where $\dot \kappa$ stands for the right-derivative of the convex function $\kappa$.
We stress that this assumption is very mild. For instance, it is always fulfilled if $\dot \Psi(0^+)<0$ since obviously $\dot \kappa\leq \dot \Psi$ (in particular, it is always fulfilled if the killing rate $k=0$), or if $\Lambda_1$ has infinite total mass (since then $\kappa(0^+)=\infty$).

If $\alpha=0$, then the number of particles of the homogeneous growth-fragmentation process $\boldsymbol{\chi}$  is simply a  branching process. We say that there is \emph{extinction} if this process dies out, that is if there exists $t>0$ such that there are no particles alive at time $t$: $\{{\mathbf u}\in\mathcal{U}: \beta_{\mathbf u}\leq t < \delta_{\mathbf u}\}=\emptyset$. 
Implicitly ruling out the degenerate case $\Lambda_1=0$ when no birth event ever happen, we see by letting $q\to 0^+$ in \eqref{Emmo} that extinction occurs a.s. if and only if $\kappa(0)\leq 0$. 

If $\alpha\neq 0$, assuming that $\mathbf{X}$ is constructed from a homogeneous version $\boldsymbol{\chi}$  as before, then we say that $\mathbf{X}$ \emph{dies suddenly} if $\boldsymbol{\chi}$  goes extinct. Note that, because of the Lamperti time-change, it is possible for $\mathbf{X}$ to reach $\emptyset$ continuously in the sense that $\boldsymbol{\chi}$  does not go extinct and nonetheless $\mathbf{X}(t)=\emptyset$ for $t$ sufficiently large. For example, Corollary 3 in \cite{Bertoingrowthfrag} shows that this happens   if $\alpha<0$ and $\inf \kappa< 0$.

We  introduce the following fundamental assumption:
\begin{equation}\tag{$\mathbf{H}$}\label{hyp}
\forall q\geq 0, \kappa(q)>0,
\end{equation}
and claim that under $(\mathbf{H})$ and for $\alpha\neq 0$, the growth-fragmentation process explodes almost surely in finite time in the following sense. 

\begin{theo}\label{thprincipal} We assume $(\mathbf{H})$ and $\alpha \neq 0$, and 
restrict ourselves to the event where $\mathbf{X}$ does not suddenly die.  
Then, for every $0<a<a'$, there exists a.s. a random time $t>0$ such that $\mathbf{X}(t)$ has infinitely many elements in the open interval $(a,a')$.
\end{theo}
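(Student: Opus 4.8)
The plan is to follow the strategy outlined in the introduction, exploiting the three ingredients developed in Section 3. First, by the truncation reduction, it suffices to establish the statement for a growth-fragmentation whose birth measure $\Lambda_1$ has finite total mass (indeed, $\mathbf{X}$ dominates every $\mathbf{X}^{(\veps)}$, so producing infinitely many particles in $(a,a')$ for some small $\veps$ is enough); moreover one checks that $(\mathbf{H})$ and the non-sudden-death hypothesis are inherited by the truncated processes, possibly after a further harmless modification. So from now on $\kappa$ is finite on all of $\R_+$, and in particular every additive martingale $M_q$ is well-defined.

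Next I would treat the homogeneous skeleton $\boldsymbol\chi$ and use the additive martingale $M_q$ (with $q$ chosen via $(\mathbf{H})$, e.g. minimising $\kappa$, so that the Malthusian-type exponent is controlled) to perform a change of measure $\pr \to \pr^{M_q}$. Under the tilted measure one has the spinal decomposition: there is a distinguished spine particle whose $q$-th power of mass evolves as an exponential of a Lévy process biased by $\e^{qy}$, and along this spine particles branch off at an enhanced rate, each starting an independent copy of $\boldsymbol\chi$. The role of $(\mathbf{H})$ is to guarantee, after applying the Lamperti time-change with index $\alpha$, that the spine reaches the boundary of the state space — $0^+$ if $\alpha<0$, or $+\infty$ if $\alpha>0$ — in finite time, continuously; this is the content of the third ingredient (the corresponding property for positive self-similar Markov processes, carried through the tilt). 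Consequently, in a bounded window of real time, infinitely many particles are emitted off the spine, each born at a mass arbitrarily close to $0$ (if $\alpha<0$) or arbitrarily large (if $\alpha>0$).

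It then remains to show that infinitely many of these off-spine excursions, started from masses near the boundary, are present in the fixed interval $(a,a')$ at some common random time $t$. For this I would use the scaling/self-similarity: a particle of the homogeneous growth-fragmentation started from mass $x$, after the Lamperti change of index $\alpha$, visits any fixed interval with a probability bounded below (for $x$ in a suitable range) and does so within a time comparable to $x^{-\alpha}$, so that by choosing the branching times off the spine to accumulate appropriately we can synchronise infinitely many such independent trials. A conditional Borel–Cantelli argument, using the independence of the sub-growth-fragmentations grafted on the spine and the fact that infinitely many of them have a uniformly positive chance of hitting $(a,a')$ near a prescribed instant, then yields that a.s.\ infinitely many particles lie in $(a,a')$ at some time. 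Finally one transfers this back from the tilted measure $\pr^{M_q}$ to $\pr$ (the event in question is measurable and the two measures are mutually absolutely continuous on the relevant sigma-field once we know $M_q$ is uniformly integrable, or by a localisation argument if it is not), and back from the finite-$\Lambda_1$ case to the general case.

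The main obstacle I expect is the synchronisation step: ensuring that infinitely many of the independent sub-processes grafted along the spine are simultaneously in $(a,a')$ at one \emph{random but common} time $t$, rather than merely each visiting $(a,a')$ at its own time. This requires quantitative control — via self-similarity and hitting-time estimates for positive self-similar Markov processes — on both the probability that a sub-process started near the boundary reaches $(a,a')$ and on the time it takes to do so, so that the countably many ``successes'' can be arranged to overlap. Handling the two cases $\alpha<0$ and $\alpha>0$ symmetrically, and dealing carefully with the conditioning inherent in the spine decomposition, is where the ``additional work'' promised in the introduction will be concentrated.
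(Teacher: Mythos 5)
Your proposal follows the same global architecture as the paper -- truncation to finite $\Lambda_1$, change of measure via an additive martingale, spine decomposition, entrance-boundary behaviour of positive self-similar Markov processes, Borel--Cantelli -- but it has two genuine gaps, both of which you actually flag as loose ends.

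First, the choice of tilting parameter is wrong, and this is not a cosmetic issue. You propose to tilt with $q=q_m$ minimising $\kappa$, but then the spine L\'evy process has Laplace exponent $\Phi(\cdot)=\kappa(q_m+\cdot)-\kappa(q_m)$ with $\dot\Phi(0^+)=\dot\kappa(q_m)=0$: the spine is driftless, and there is no reason for it to reach $0$ or $+\infty$ in finite time after the Lamperti change, which is exactly what your argument needs. The paper instead picks two values $q_\pm$ on either side of $q_m$ with $\dot\kappa(q_-)<0<\dot\kappa(q_+)$ and (via Biggins' criterion $\dot\kappa(q)<\kappa(q)/q$, valid near $q_m$) with $L^p$-convergence of $M_{q_\pm}$. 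Moreover, the two tilts play complementary roles and you actually need both: for $\alpha<0$ the main process is studied under $\Q^-$ so the spine mass hits $0$ at a finite Lamperti time $\zeta^*$ and emits siblings with vanishing birth masses; whereas the estimate that a sub-growth-fragmentation started from small $x$ reaches $(a,a')$ is first proved under $\Q^+_x$ (upward drift, $0^+$ an entrance boundary) and then transferred to the untilted $\pr_x$ by H\"older's inequality and the $L^p$-boundedness of $M_{q_+}$. Your proposal never separates these two tiltings.

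Second, you correctly identify synchronisation as ``the main obstacle'' but do not supply the idea that resolves it, and the route you gesture at (hitting times scaling like $x^{-\alpha}$, then ``arranging'' overlap) is not what is needed. The paper's key Lemma~3.9 is stronger than a hitting estimate: it shows there is a fixed deterministic window $[t,t']$ such that, uniformly in $x$ near the boundary, with probability bounded below $\mathbf{X}$ started from $x$ has particles in $(a,a')$ \emph{throughout} $[t,t']$. This interval statement is what makes synchronisation automatic: under $\Q^-$ the birth times $b_{\mathbf{v}^*_n}$ of the siblings grafted on the spine converge to $\zeta^*<\infty$, so as soon as $b_{\mathbf{v}^*_n}>\zeta^*-(t'-t)$ the absolute windows $[b_{\mathbf{v}^*_n}+t,\,b_{\mathbf{v}^*_n}+t']$ all contain the single instant $\zeta^*+t$. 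Together with the independence of the sub-growth-fragmentations under the spine decomposition and Borel--Cantelli, this is what puts infinitely many particles in $(a,a')$ at a common random time. Finally, when you transfer back to $\pr$, note that $\Q^\pm$ and $\pr$ are \emph{not} mutually absolutely continuous on the whole space: by Lemma~3.6 the density $M_{q_\pm}(\infty)$ vanishes exactly on the event of sudden death, so the transfer only gives the conclusion under $\pr(\cdot\mid \text{no sudden death})$ -- which is indeed the statement of the theorem, but your phrasing ``the two measures are mutually absolutely continuous'' glosses over this.
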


\begin{rem}\label{rem:extinction} Under $(\mathbf{H})$, we have $\kappa(0^+)>0$ and therefore the probability that $\mathbf{X}$ does not suddenly die is strictly positive
(note that $\kappa(0^+)>0$ also rules out the case when $\Lambda_1=0$). \end{rem}

 Actually, a simple generalization of our argument yields a stronger version of Theorem \ref{thprincipal}. Specifically, for every $0< a_1< a'_1<a_2<a'_2<\cdots<a_n<a'_n$, there exists a.s. a random time $t>0$ such that $\mathbf{X}(t)$ has infinitely many elements in each of the intervals $(a_i,a_i')$. However, for the sake of simplicity, we shall concentrate on the case of a single interval.

\section{Truncation, tilting, and starting from a boundary}

Throughout this section, it is assumed that $(\mathbf{H})$ holds.
\subsection{Reduction by truncation}

We start with a simple general statement about the minimum of $\kappa.$
\begin{lemma} \label{L1} The cumulant function $\kappa$ reaches its positive minimum on $[0,\infty)$ at a value $q_m>0$.
\end{lemma}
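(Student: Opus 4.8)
The plan is to combine convexity of $\kappa$ (already recorded) with a coercivity property at $+\infty$ and the information about $\kappa$ near $0^+$ supplied by the standing technical assumption. Granting these, existence of a minimiser is a soft compactness argument and positivity of the minimal value is immediate from $(\mathbf H)$.

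The one substantial point is to show that $\kappa(q)\to+\infty$ as $q\to\infty$; I would prove it by contradiction, crucially using $(\mathbf H)$. Suppose $\kappa$ stays bounded on $[0,\infty)$. Then $\kappa(0)<\infty$, so $\Lambda_1$ is a finite measure and $\int_{(-\infty,0)}(1-\e^y)^q\,\Lambda_1(\mathrm dy)\to 0$ by dominated convergence. Write $\psi:=\Psi+k$, the Laplace exponent of the (unkilled) spectrally negative L\'evy process underlying $X$, so that $\psi(0)=0$ and $\psi$ is convex. Since $\kappa(q)=\Psi(q)+\int(1-\e^y)^q\,\Lambda_1(\mathrm dy)$, boundedness of $\kappa$ forces $\psi$ to be bounded as well, and a convex function bounded above on $[0,\infty)$ is non-increasing, so $\psi\leq 0$. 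Now $(\mathbf H)$ gives $0<\kappa(q)=\psi(q)-k+\int(1-\e^y)^q\,\Lambda_1(\mathrm dy)$, i.e. $\int(1-\e^y)^q\,\Lambda_1(\mathrm dy)>k+|\psi(q)|\geq 0$; letting $q\to\infty$, the left-hand side vanishes, which forces $k=0$ and $\psi(q)\to 0$. Being non-increasing with $\psi(0)=0$ and vanishing at infinity, $\psi\equiv 0$, whence $\dot\Psi(0^+)=0$. But then the standing requirement that $k\neq 0$ or $\dot\Psi(0^+)<0$ is violated — a contradiction. Hence $\kappa(q)\to+\infty$.

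With coercivity in hand, $\kappa$ is a lower semicontinuous convex function on $[0,\infty)$ (continuous where finite, $+\infty$ otherwise), bounded below by $0$ under $(\mathbf H)$, and tending to $+\infty$ at $+\infty$; its sublevel sets $\{\kappa\leq c\}$ for $\inf\kappa<c<\infty$ are therefore non-empty compact intervals, so $\kappa$ attains its minimum over $[0,\infty)$. Finally the technical assumption excludes the origin as a minimiser: if $\kappa(0^+)=\infty$ then $\kappa$ is large on a right-neighbourhood of $0$, while if $\dot\kappa(0^+)<0$ then $\kappa$ is strictly decreasing near $0$; in both cases $\inf_{[0,\infty)}\kappa=\inf_{[\delta,\infty)}\kappa$ for some $\delta>0$. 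Thus the minimum is attained at some $q_m>0$, and $\kappa(q_m)>0$ by $(\mathbf H)$ — which is exactly the assertion.

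I expect the coercivity step to be the only real obstacle; the rest is routine convexity book-keeping. The full write-up should record the two elementary facts used above — ``convex and bounded above on a half-line $\Rightarrow$ non-increasing'' and ``non-increasing, vanishing at $0$ and at $\infty$ $\Rightarrow$ identically $0$'' — and should not forget the case $\Lambda_1((-\infty,0))=\infty$, in which $\kappa\equiv+\infty$ near $0$; this is precisely the first alternative in the technical assumption, and causes no difficulty.
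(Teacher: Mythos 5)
Your overall structure (coercivity at $+\infty$, then soft attainment via convexity, then exclusion of the origin via the technical hypothesis) matches the paper, and each step is sound, but the coercivity step is argued by a genuinely different route. The paper observes that the integral term $\kappa-\Psi$ tends to $0$ by dominated convergence, so $\kappa(+\infty)=\Psi(+\infty)$, and then uses the probabilistic representation $\e^{\Psi(q)}=\E[\e^{q\xi(1)}]$ to see that $\Psi(+\infty)$ is either $+\infty$ (when $\pr(\xi(1)>0)>0$) or strictly negative; the latter is ruled out directly by $(\mathbf{H})$. You instead run a purely analytic argument: if $\kappa$ fails to blow up, convexity forces the unkilled exponent $\psi$ to be non-increasing hence $\leq 0$, and letting $q\to\infty$ in $(\mathbf{H})$ kills both $k$ and $\psi$, contradicting the standing absorption hypothesis. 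Both work; the paper's version is shorter but leans on a fact about L\'evy exponents, while yours is self-contained convexity book-keeping. (A small bonus of your route: once $\psi\equiv 0$ you in fact get $\Lambda=0$ and hence $\kappa\equiv -k$, so you could contradict $(\mathbf{H})$ directly without invoking the absorption hypothesis at all.)

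One genuine slip, however, must be fixed. You open the contradiction by assuming ``$\kappa$ stays bounded on $[0,\infty)$'' and deduce $\kappa(0)<\infty$ and $\Lambda_1$ finite. This is not the correct negation of coercivity: $\kappa$ may well be $+\infty$ on a right-neighbourhood of $0$ (exactly when $\Lambda_1$ is infinite, which is one of the two alternatives of the standing technical assumption that you mention at the end), and in that case $\kappa$ is unbounded on $[0,\infty)$ even though it could a priori remain bounded at $+\infty$. The correct negation is that $\kappa$ is bounded above on $[2,\infty)$, where it is always finite; convexity then still gives what you need. In particular you should not (and need not) claim that $\Lambda_1$ is finite — the dominated convergence $\int_{(-\infty,0)}(1-\e^y)^q\,\Lambda_1(\mathrm dy)\to 0$ already holds with $(1-\e^y)^2$ as dominating function, since $\kappa(2)<\infty$. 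With this re-framing the rest of your argument goes through verbatim.
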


\begin{proof} We investigate the limits of $\kappa$ when $q$ tends to the boundaries of its domain $\{q\geq 0: \kappa(q)<+\infty\}$. First for $q\to +\infty$, notice that the integral term in $\kappa-\Psi$ converges to $0$ by dominated to convergence, implying that $\kappa$ has the same limit as $\Psi$ at infinity. Taking the limit as $q\to \infty$ in the formula $\e^{\Psi(q)}=\E[\e^{q\xi(1)}]$ where $\xi$ is a L\'evy process with Laplace exponent $\Psi$, it is clear that $\Psi(+\infty)=+\infty$ if $\pr(\xi(1)>0)>0$, and $\Psi(+\infty)<0$ otherwise. Thus, assuming $(\mathbf{H})$, we have $\kappa(+\infty)=+\infty$.

Recall then that $\kappa $ is convex and introduce
$$\underline{q}=\inf\{q\geq 0: \kappa(q)<\infty\}=\inf\left \{q\geq 0: \int_{(-\infty,0)} (1-\e^y)^q\Lambda_1({\mathrm d}y)<\infty\right \}.
$$
Note that, by monotone convergence, $\kappa(q)$ converges to $\kappa(\underline{q})$ as $q$ decreases to $\underline{q}$. If $\kappa(\underline{q})=+\infty$ then $\kappa$ reaches its minimum on the open interval $(\underline{q},\infty)$, while if $\kappa(\underline{q})<\infty$, then it is reached on $[\underline{q},\infty)$. This proves our claim if either  $\underline{q}=0$ and $\kappa(0)=+\infty$, or $\underline{q}>0$. It remains to consider the case when $\kappa(0)<+\infty$. But then we have assumed that $\dot \kappa(0+)<0$, and the same conclusion follows. 
\end{proof}

Next for $\veps>0$, we introduce 
\[\kappa^{(\veps)}(q)=\Psi(q)+\int_{(-\infty,-\veps)} (1-\e^y)^q\Lambda_1(\mathrm dy),\]
that is $\kappa^{(\veps)}$ is the cumulant associated to the truncated homogeneous growth-fragmentation  $\boldsymbol{\chi}^{(\veps)}$. 
We want to prove the following:

\begin{prop}\label{trunckappapos} For $\veps$ small enough, we have $\kappa^{(\veps)}(q)\in(0,+\infty)$ for all $q\geq 0$ and the right derivative $\dot\kappa^{(\veps)}(0^+)$ of $\kappa^{(\veps)}$
at $0$ is strictly negative.

\end{prop}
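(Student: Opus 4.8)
The plan is to deduce everything from the fact that $\kappa^{(\veps)}\uparrow\kappa$ pointwise as $\veps\downarrow 0$ (monotone convergence), combined with Lemma~\ref{L1} and the elementary inequality $(1-\e^y)^q\geq(1-\e^y)^p$ for $0\leq q\leq p$ and $y<0$ (the base lies in $(0,1)$). Finiteness is immediate: $\mathbbm{1}_{\{y<-\veps\}}\Lambda_1$ is a finite measure and $(1-\e^y)^q\leq 1$, so $\int_{(-\infty,-\veps)}(1-\e^y)^q\Lambda_1(\mathrm dy)\leq\Lambda_1((-\infty,-\veps))<\infty$, while $\Psi(q)<\infty$ for all $q\geq0$; hence $\kappa^{(\veps)}(q)<\infty$.

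For positivity, set $m=\min_{q\geq0}\kappa(q)>0$ (Lemma~\ref{L1}) and recall $\underline{q}=\inf\{q\geq0:\kappa(q)<\infty\}\in[0,2]$. Two observations drive the argument: integrating the inequality above yields, for $q\leq p$,
\[\kappa^{(\veps)}(q)\geq\Psi(q)-\Psi(p)+\kappa^{(\veps)}(p);\]
and the difference $h_\veps(q):=\kappa(q)-\kappa^{(\veps)}(q)=\int_{[-\veps,0)}(1-\e^y)^q\Lambda_1(\mathrm dy)$ is nonincreasing in $q$, with $h_\veps(p)\to0$ as $\veps\to0$ for any fixed $p>\underline{q}$ (dominated convergence). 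Using that $\Psi$ is continuous on $[0,\infty)$ and $m>0$, I would fix $\underline{q}_-\leq\underline{q}<p$ close enough to $\underline{q}$ that the oscillation of $\Psi$ on $[\underline{q}_-,p]$ is $<m/2$ (with $\underline{q}_-=0$ when $\underline{q}=0$); then $\kappa(p)\geq m$ and $\kappa^{(\veps)}(p)\uparrow\kappa(p)$. For $\veps$ small one checks $\kappa^{(\veps)}>0$ on three ranges: on $[p,\infty)$, $\kappa^{(\veps)}(q)=\kappa(q)-h_\veps(q)\geq m-h_\veps(p)>0$; on $[\underline{q}_-,p]$, the displayed inequality gives $\kappa^{(\veps)}(q)\geq\inf_{[\underline{q}_-,p]}\Psi-\Psi(p)+\kappa^{(\veps)}(p)\geq\inf_{[\underline{q}_-,p]}\Psi-\Psi(p)+m/2>0$; and on $[0,\underline{q}_-)$ (empty unless $\underline{q}>0$), since $\underline{q}_-<\underline{q}$ one has $\int_{(-\infty,-\veps)}(1-\e^y)^{\underline{q}_-}\Lambda_1(\mathrm dy)\to+\infty$, whence $\kappa^{(\veps)}(q)\geq\inf_{[0,\underline{q}_-]}\Psi+\int_{(-\infty,-\veps)}(1-\e^y)^{\underline{q}_-}\Lambda_1(\mathrm dy)>0$.

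For the right-derivative at $0$, since $\ln(1-\e^y)$ is bounded on $(-\infty,-\veps)$ and $\Lambda_1((-\infty,-\veps))<\infty$, differentiating under the integral gives
\[\dot\kappa^{(\veps)}(0^+)=\dot\Psi(0^+)+\int_{(-\infty,-\veps)}\ln(1-\e^y)\Lambda_1(\mathrm dy)\leq\dot\Psi(0^+)<\infty,\]
and the integral decreases, as $\veps\downarrow0$, to $\int_{(-\infty,0)}\ln(1-\e^y)\Lambda_1(\mathrm dy)\in[-\infty,0]$. If $\Lambda_1$ is infinite this limit is $-\infty$ (as $y\to0^-$, $|\ln(1-\e^y)|\to\infty$ while $\Lambda_1$ charges infinite mass near $0$), so $\dot\kappa^{(\veps)}(0^+)\to-\infty$; if $\Lambda_1$ is finite then $\kappa(0^+)=\kappa(0)<\infty$, the standing assumption forces $\dot\kappa(0^+)<0$, and $\dot\kappa^{(\veps)}(0^+)\downarrow\dot\kappa(0^+)<0$. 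In either case $\dot\kappa^{(\veps)}(0^+)<0$ for all small $\veps$.

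I expect the delicate point to be the positivity near the left edge $\underline{q}$ of the domain of $\kappa$, where $\kappa$ itself may be infinite, so that uniform convergence $\kappa^{(\veps)}\to\kappa$ on a compact is unavailable; this is why the argument is organised around the pointwise comparison, the control of the oscillation of the continuous function $\Psi$ on an arbitrarily short interval about $\underline{q}$, and the divergence of $\kappa^{(\veps)}$ strictly to the left of $\underline{q}$, with a short case split according to whether $\underline{q}=0$ or $\underline{q}>0$.
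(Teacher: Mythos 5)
Your proof is correct but follows a genuinely different route from the paper's. The paper argues by contradiction: it first records that $\kappa^{(\veps)}\to\kappa$ \emph{uniformly} on each $[q_0,\infty)$ with $\kappa(q_0)<\infty$ (and uniformly to $+\infty$ where $\kappa=+\infty$), then supposes that $\kappa^{(\veps)}(q_\veps)\leq 0$ holds along some sequence, deduces from the uniform convergence that $q_\veps\to\underline{q}$, and finally derives a contradiction from a convexity slope estimate $\frac{\kappa^{(\veps)}(q')-\kappa^{(\veps)}(q_\veps)}{q'-q_\veps}\leq\dot\kappa^{(\veps)}(q'')$ letting $\veps\to 0$ with $q'$ close to $\underline{q}$. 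You instead give a constructive three-range argument built on the elementary pointwise comparison $(1-\e^y)^q\geq(1-\e^y)^p$ for $q\leq p$, which converts into $\kappa^{(\veps)}(q)\geq\Psi(q)-\Psi(p)+\kappa^{(\veps)}(p)$; pinning $\underline{q}_-<\underline{q}<p$ near the left edge $\underline{q}$ of the finiteness domain and using the smallness of $h_\veps(p)$ and the divergence of $\kappa^{(\veps)}(\underline{q}_-)$ then handles $[p,\infty)$, $[\underline{q}_-,p]$ and $[0,\underline{q}_-)$ separately. For the derivative at $0$, the paper avoids any explicit computation in the $\kappa(0)=\infty$ case by noting that $\kappa^{(\veps)}(0)>\kappa^{(\veps)}(2)$ for small $\veps$ and invoking convexity; you instead differentiate under the (finite) integral and split cases according to whether $\Lambda_1$ is finite. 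Both are sound; your version trades the uniform-convergence lemma and the proof by contradiction for a more hands-on piecewise lower bound, which is perhaps slightly longer but has the virtue of making explicit what controls each range, whereas the paper's argument more economically concentrates the whole difficulty at the single point $\underline{q}$. One tiny stylistic remark: in your middle-range bound it is cleaner to write $\kappa^{(\veps)}(p)\geq\kappa(p)-m/4\geq 3m/4$ for $\veps$ small (rather than $\geq m/2$), so that the final inequality $>-m/2+3m/4>0$ has explicit slack; this is cosmetic and does not affect correctness.
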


\begin{proof}
We first make a simple remark. Let any $q_0\geq 0$ such that $\kappa(q_0)<\infty.$ Then for $q\geq q_0$, we always have 
$$|\kappa^{(\veps)}(q)-\kappa(q)|\leq \int_{[-\veps,0)} (1-\e^{y})^{q_0}\Lambda(\mathrm dy),$$
 which implies that $\kappa^{(\veps)}$ converges uniformly to $\kappa$ on the interval $[q_0,\infty)$. This guarantees that its infimum also converges, and thus $\inf_{q\geq q_0} \kappa^{(\veps)}(q)>0$ for $\veps$ small enough, and this infimum is actually a minimum. Also, invertedly, if $q_0$ is such that $\kappa(q_0)=\infty$, then $\kappa^{(\veps)}$ converges uniformly to infinity on $[0,q_0].$

Assume by contradiction that, for all $\veps>0$, there exists $q_{\veps}$ such that $\kappa^{(\veps)}(q_{\veps})\leq 0$. Then, as $\veps$ tends to $0$, $q_{\veps}$ must converge to $\underline{q}=\inf\{q\geq 0: \kappa(q)<\infty\}$. Indeed, if a subsequential limit was strictly smaller or larger than $\underline{q}$, then the previous paragraph would be contradicted.

Now let $q'$ and $q''$ be such that $\underline{q}<q'<q''$. By standard convexity properties, we have
\[\frac{\kappa^{(\veps)}(q')-\kappa^{(\veps)}(q_{\veps})}{q'-q_{\veps}}\leq \dot\kappa^{(\veps)}(q'').\]
However since $\kappa^{(\veps)}(q_{\veps})\leq 0$ for all $\veps$, we also have
\[\underset{\veps\to 0}\liminf\; \frac{\kappa^{(\veps)}(q')-\kappa^{(\veps)}(q_{\veps})}{q'-q_{\veps}} \geq \underset{\veps\to 0}\liminf\;\frac{\kappa^{(\veps)}(q')}{q'-q_{\veps}}=\;\frac{\kappa(q')}{q'-\underline{q}} .\]
Recall that by monotone convergence, $\lim_{q\to \underline{q}^+}\kappa(q)=\kappa(\underline{q})>0$, so we may choose $q'$ close enough to $\underline{q}$ such that 
\[\frac{\kappa(q')}{q'-\underline{q}}>\dot \kappa(q'').\]
This  leads to a contradiction, since one readily checks that $\dot \kappa^{(\veps)}(q'')$ also converges to $\dot \kappa(q'')$ as $\veps$ tends to $0$.

We have thus proven that for $\veps>0$ sufficiently small, $\kappa^{(\veps)}(q)>0$ for all $q\geq 0$, and the fact that $\kappa^{(\veps)}(q)<\infty$ is obvious since $\Lambda_1((-\infty,-\veps))<\infty$. It remains to verify that $\dot \kappa^{(\veps)}(0^+)<0$, which is straightforward by monotone convergence when $\kappa(0)<+\infty$ and $\dot \kappa(0^+)<0$. So assume that $\kappa(0)=+\infty$. Then by monotone convergence, $\lim_{\veps\to 0} \kappa^{(\veps)}(0)= \infty$, and we may choose $\varepsilon >0$ small enough so that 
$\kappa^{(\veps)}(0)>\kappa(2)\geq \kappa^{(\veps)}(2)$. By convexity, this forces $\dot \kappa^{(\veps)}(0^+)<0$.  
\end{proof}

%Thus, as $\veps$ tends to $0$, the quantity\[\frac{\kappa^{(\veps)}(\underline{q})-\kappa^{(\veps)}(q_{\veps})}{\underline{q}-q_{\veps}}\]
%tends to infinity. This gives us a contradiction, because, since $\kappa^{(\veps)}$ is convex, we have
%\[\frac{\kappa^{(\veps)}(\underline{q})-\kappa^{(\veps)}(q_{\veps})}{\underline{q}-q_{\veps}} \leq \dot \kappa_{\veps} (q_1)\]
%for any $q_1>\underline{q}$, and then the right-hand side converges to $\dot \kappa(q_1)$ which is finite.

The construction of growth-fragmentation processes by truncation shows that ${\mathbf X}^{(\veps)}(t) \subset {\mathbf X}(t)$ for every $\veps>0$. 
An important consequence of Proposition \ref{trunckappapos}, combined with Lemma \ref{lem:truncdeath} for us is that, in order to prove Theorem \ref{thprincipal}, we can restrict ourselves to the cases where $\Lambda_1$ is finite. For this, we also need the following elementary lemma.

\begin{lemma}\label{lem:truncdeath} Let, for $\veps>0$, $\mathbf{X}^{(\veps)}$ be the truncation of $\mathbf{X}$ defined in Section \ref{subsec:construction}. Then, almost surely, $\mathbf{X}$ dies suddenly if and only if $\mathbf{X}^{(\veps)}$ dies suddenly for all $\veps>0$.
\end{lemma}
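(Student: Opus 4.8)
The plan is to translate the statement into one about the underlying \emph{homogeneous} processes and then to compare survival probabilities of birth--death processes. Recall that $\mathbf{X}$ dies suddenly precisely when the homogeneous growth-fragmentation $\boldsymbol{\chi}$ from which it is built goes extinct (i.e. $\boldsymbol{\chi}(t)=\emptyset$ for all large $t$), and likewise $\mathbf{X}^{(\veps)}$ dies suddenly precisely when $\boldsymbol{\chi}^{(\veps)}$ goes extinct; the truncation coupling of Section~\ref{subsec:construction} realizes all of these on one probability space with $\boldsymbol{\chi}^{(\veps)}(t)\subseteq\boldsymbol{\chi}^{(\veps')}(t)\subseteq\boldsymbol{\chi}(t)$ as multisets whenever $0<\veps'<\veps$. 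The ``only if'' implication is then immediate: if $\boldsymbol{\chi}(t)=\emptyset$ for $t\geq t_0$, then $\boldsymbol{\chi}^{(\veps)}(t)\subseteq\boldsymbol{\chi}(t)=\emptyset$ for $t\geq t_0$, for every $\veps>0$.

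For the converse I would argue at the level of probabilities. Set $A_\veps=\{\boldsymbol{\chi}^{(\veps)}\text{ survives}\}=\{\boldsymbol{\chi}^{(\veps)}(t)\neq\emptyset\ \forall t\geq0\}$ and $A=\{\boldsymbol{\chi}\text{ survives}\}$. The multiset inclusions show that $A_\veps$ is nondecreasing as $\veps\downarrow0$, so $S:=\bigcup_{\veps>0}A_\veps=\bigcup_{n\geq1}A_{1/n}$, and moreover $S\subseteq A$. Since ``$\mathbf{X}^{(\veps)}$ dies suddenly for every $\veps>0$'' is exactly $S^{c}$ while ``$\mathbf{X}$ dies suddenly'' is exactly $A^{c}$, the lemma is the assertion $S=A$ a.s.; because $S\subseteq A$, it suffices to prove $\pr(S)\geq\pr(A)$, i.e. (by monotone convergence) $\lim_{\veps\downarrow0}\pr(A_\veps)\geq\pr(A)$.

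To evaluate $\pr(A_\veps)$, recall that since $\Lambda_1^{(\veps)}$ is finite the particle count of $\boldsymbol{\chi}^{(\veps)}$ is a linear birth--death process in which each particle dies at rate $k$ and splits in two at rate $r_\veps:=\Lambda_1((-\infty,-\veps))$, whose survival probability is the classical $(1-k/r_\veps)^{+}$. As $\veps\downarrow0$, continuity from below of $\Lambda_1$ gives $r_\veps\uparrow r_0:=\Lambda_1((-\infty,0))\in(0,\infty]$, hence $\pr(A_\veps)\uparrow(1-k/r_0)^{+}$ with the convention $k/\infty=0$. If $\Lambda_1$ is finite this limit equals $\pr(A)$, since the particle count of $\boldsymbol{\chi}$ is itself the birth--death process with rates $(k,r_0)$ (equivalently one may invoke the fact recorded after \eqref{Emmo} that a homogeneous growth-fragmentation goes extinct a.s. iff $\kappa(0)=-k+r_0\leq0$, which pins down $\pr(A)$ in the critical and subcritical regimes). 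If $\Lambda_1$ is infinite then $r_\veps\to\infty$, so $\lim_{\veps\downarrow0}\pr(A_\veps)=1\geq\pr(A)$. In both cases $\lim_{\veps\downarrow0}\pr(A_\veps)\geq\pr(A)$, which closes the argument.

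I do not anticipate a genuine difficulty here: the only thing to check carefully is that the monotone coupling of the $\mathbf{X}^{(\veps)}$ descends to the homogeneous processes $\boldsymbol{\chi}^{(\veps)}$ with the stated multiset inclusions and the same killing clocks, and that the particle count of $\boldsymbol{\chi}^{(\veps)}$ is indeed the birth--death process described above — both of which are read off the construction in Section~\ref{subsec:construction} together with the cited results of \cite{Bertoincompfrag}. The mild point to keep in mind is that, for a fixed realization, extinction of every $\boldsymbol{\chi}^{(\veps)}$ does not obviously force extinction of $\boldsymbol{\chi}$ (the trees could grow without any single truncation capturing an infinite line), which is exactly why the argument is run through the equality of probabilities of the nested events $A_\veps$ rather than pathwise.
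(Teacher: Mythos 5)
Your argument is correct, and it takes a genuinely different route from the paper. Both proofs reduce to the homogeneous process and note that the ``only if'' direction is immediate from the monotone coupling. For the converse, the paper views the discrete-time particle counts $Z_n^{(\veps)}$ and $Z_n$ as Galton--Watson processes (possibly infinite-valued, which requires a cited appendix), proves uniform convergence of the generating functions $F_\veps\to F$ on a compact interval, passes to the limit in the fixed-point relation $F_\veps(p_\veps)=p_\veps$, and then invokes uniqueness of the sub-$1$ fixed point of $F$. You instead exploit the fact that in the homogeneous case the per-particle splitting rate $r_\veps=\Lambda_1((-\infty,-\veps))$ and killing rate $k$ are constants independent of particle size, so the continuous-time particle count of $\boldsymbol{\chi}^{(\veps)}$ is exactly a linear birth--death process with survival probability $(1-k/r_\veps)^+$; the result then follows from $\pr(S)=\lim_\veps\pr(A_\veps)$, $S\subseteq A$, and a direct comparison of the explicit values. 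Your computation is right (and the edge case $k=0$, where all survival probabilities are $1$, is harmless), and it has the advantage of sidestepping entirely the issue of infinite-valued Galton--Watson processes: when $\Lambda_1$ is infinite you simply get $\pr(A_\veps)\to1$ and hence $\pr(A)=1$ without having to reason about $F$ at all. The trade-off is that the paper's generating-function argument would survive even if no closed formula for $\pr(A_\veps)$ were available, whereas yours leans on the special birth--death structure; in the present setting both are fully valid. You correctly identified and resolved the one subtle point, namely that pathwise extinction of every $\boldsymbol{\chi}^{(\veps)}$ does not obviously force extinction of $\boldsymbol{\chi}$, which is exactly why one must argue through the probabilities of the nested events $A_\veps$.
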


\begin{proof} Equivalently, we can just prove that, in the homogeneous case, $\boldsymbol{\chi}$  goes extinct if and only if all its truncations $\boldsymbol{\chi}^{(\veps)}$  do, so we now assume that $\alpha=0$. The direct implication is immediate, so we focus on the reverse. For $n\in\Z_+$, let $Z^{(\veps)}_n$ be the number of particles of $\boldsymbol{\chi}^{(\veps)}(n)$, and $Z_n$ be that of $\boldsymbol{\chi}(n)$. By homogeneity, these are all Galton-Watson processes (possibly taking infinite values, but that does not pose a problem, as is shown in \cite{Steph13}, Appendix B), and we also have 
\[Z_n=\underset{\veps\to 0}\lim\; Z_n^{(\veps)}.\]
We let respectively $p$ and $p_{\veps}$ be the extinction probabilities of $(Z_n,n\in\Z_+)$ and $(Z^{(\veps)}_n,n\in\Z_+)$, and $F$ and $F_{\veps}$ be their generating functions, i.e. $F(x)=\E[x^{Z_1}]$ and $F_{\veps}(x)=\E[x^{Z_1}]$ for $x>0$. We know that $p_{\veps}$ decreases to a certain limit $p'$ as $\veps$ tends to $0$, and we will show that $p'=p.$ Note that $p<1$ by Remark \ref{rem:extinction}, this implies that $\mathbb{E}[Z_1]>1$, and by monotone convergence, $\mathbb{E}[Z^{(\veps)}_1]>1$ for $\veps$ small enough, which in turn shows that $(Z^{(\veps)}_n,n\in\Z_+)$ is supercritical, and thus $p_{\veps}<1$. Again by monotone convergence, $F_{\veps}(x)$ increases to $F(x)$ for all $x\in[0,1)$, and this implies uniform convergence on the compact interval $[0,p_{\veps_0}]$, for a fixed small enough $\veps_0$. We can then take the limit in the standard fixed point relation $F_{\veps}(p_{\veps})=p_{\veps}$ to obtain $F(p')=p'.$ Thus $p'$ is a fixed point of $F$, but $p'<1$, and $F$ classically only has one fixed point apart from $1$, implying $p=p'$.
\end{proof}

The conclusion that we draw from this section is that there is no loss of generality in assuming that the measure $\Lambda_1$ is finite. This will simplify matters greatly, and we will do it from now on.

\subsection{Tilted probabilities and spinal decomposition}
We investigate in this section the additive martingale $M_q$ and what happens when we use it to bias the distribution of the homogeneous growth-fragmentation process.

\begin{prop}\label{prop:choiceq} There exist $q_+>q_->0$ such that $\dot \kappa(q_-)<0<\dot \kappa(q_+)$ and the martingales $M_{q_-}$ and $M_{q_+}$ both converge in $L^p$ for some $p>1$.
\end{prop}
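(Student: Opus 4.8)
The plan is to exploit the convexity of $\kappa$ together with the $L^p$-convergence criterion for additive martingales of homogeneous growth-fragmentations established in \cite{Bertoincompfrag}. Recall (Theorem 1 and Corollary 3 there) that the additive martingale $M_q$ is uniformly integrable, and in fact converges in $L^p$ for some $p\in(1,2]$, as soon as $\kappa(pq)<\infty$ and $\kappa(pq)<p\kappa(q)$; the latter inequality is the key ``$L^p$-spectral'' condition, which can be rephrased by saying that the function $q\mapsto \kappa(q)/q$ is strictly decreasing at the point $q$ when evaluated against the dilation factor $p$. Since under $(\mathbf H)$ the measure $\Lambda_1$ is now assumed finite, $\kappa$ is finite on all of $[0,\infty)$, so the finiteness condition $\kappa(pq)<\infty$ is automatic, and the only thing to check is the strict inequality $\kappa(pq)<p\kappa(q)$ for some $p>1$.

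Next I would use Lemma 3.1: $\kappa$ attains its minimum at some $q_m>0$, with $\kappa$ strictly decreasing on $[0,q_m)$ (so $\dot\kappa<0$ there) and strictly increasing on $(q_m,\infty)$ (so $\dot\kappa>0$ there), and $\kappa>0$ everywhere. Pick any $q_-\in(0,q_m)$; then $\dot\kappa(q_-)<0$. Because $\kappa$ is continuous and $q\mapsto\kappa(q)/q$ is positive, by choosing $q_-$ and then a slightly larger dilation one gets $p>1$ with $pq_-$ still in a region where $\kappa$ is small relative to $p\kappa(q_-)$: quantitatively, $\kappa(pq_-)\to\kappa(q_-)<p\kappa(q_-)$ as $p\downarrow 1$ (using $\kappa(q_-)>0$), so by continuity $\kappa(pq_-)<p\kappa(q_-)$ for $p$ sufficiently close to $1$, which is exactly the $L^p$-convergence criterion; shrinking $p$ further if necessary we also keep $pq_-<q_m$ if we wish, though that is not needed. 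Symmetrically, pick $q_+>q_m$ so that $\dot\kappa(q_+)>0$; the very same argument, $\kappa(pq_+)\to\kappa(q_+)<p\kappa(q_+)$ as $p\downarrow 1$, gives $p>1$ with $\kappa(pq_+)<p\kappa(q_+)$, hence $M_{q_+}$ converges in $L^p$. Taking the smaller of the two values of $p$ handles both martingales simultaneously.

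The one genuine point requiring care — and the step I expect to be the main obstacle — is citing and applying the correct $L^p$-convergence criterion from \cite{Bertoincompfrag} for \emph{compensated} fragmentations, making sure the hypotheses there (in particular any sign or integrability condition on $\kappa$, and the restriction $p\le 2$) are met in our setting. Here $(\mathbf H)$ is what saves us: $\kappa$ is everywhere positive, so $p\kappa(q)>\kappa(q)=\lim_{p\downarrow1}\kappa(pq)$, and the strict inequality needed for $L^p$-convergence is obtained purely from continuity of $\kappa$ near $p=1$, with no further spectral gap assumption. I would therefore phrase the proof as: invoke Lemma 3.1 to get $q_m$ and points $q_-<q_m<q_+$ with the desired signs of $\dot\kappa$; invoke \cite{Bertoincompfrag} for the criterion ``$\kappa(pq)<p\kappa(q)$ and $\kappa(pq)<\infty$ imply $M_q\in L^p$''; and verify the criterion at $q_-$ and $q_+$ by the continuity argument above, finally replacing $p$ by $\min$ of the two exponents.
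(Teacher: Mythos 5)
Your overall strategy --- invoke a Biggins-type $L^p$-criterion for the discretized process and verify it using convexity and Lemma~3.1 --- is the same as the paper's (which cites Dadoun's Theorem~2.3 for the reduction to a branching random walk rather than Theorem~1/Corollary~3 of \cite{Bertoincompfrag}, but that is a minor point). The genuine gap is in your continuity argument for verifying $\kappa(pq)<p\kappa(q)$.

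You write that since $\kappa(pq)\to\kappa(q)$ as $p\downarrow 1$ and $\kappa(q)<p\kappa(q)$, ``by continuity'' $\kappa(pq)<p\kappa(q)$ for $p$ near $1$. This is a fallacy: as $p\downarrow 1$ \emph{both} $\kappa(pq)$ and $p\kappa(q)$ tend to the common value $\kappa(q)$, so the strict inequality between the limit of one side and the other side tells you nothing. What actually controls the sign of $\kappa(pq)-p\kappa(q)$ for $p$ just above $1$ is the derivative at $p=1$, namely $q\dot\kappa(q)-\kappa(q)$; the criterion therefore holds for $p$ near $1$ if and only if $\dot\kappa(q)<\kappa(q)/q$. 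This fails for arbitrary $q>q_m$: for instance if $\sigma^2>0$ then $\kappa(q)\sim\tfrac{\sigma^2}{2}q^2$ as $q\to\infty$, so $\dot\kappa(q)\sim\sigma^2 q$ while $\kappa(q)/q\sim\tfrac{\sigma^2}{2}q$, and the required inequality is violated for large $q$. (For $q_-<q_m$ your choice happens to work since $\dot\kappa(q_-)<0<\kappa(q_-)/q_-$, but not for the reason you give.) The fix is what the paper does: at $q=q_m$ one has $\dot\kappa(q_m)=0<\kappa(q_m)/q_m$ because $\kappa(q_m)>0$ by $(\mathbf{H})$ and Proposition~3.1, so by continuity $\dot\kappa(q)<\kappa(q)/q$ in a neighbourhood of $q_m$; pick $q_-<q_m<q_+$ inside that neighbourhood, giving the required signs of $\dot\kappa$ as well as the $L^p$-criterion.
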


\begin{proof}
We use Theorem $2.3$ from \cite{Benjamin}. This theorem states in particular that, if $\Lambda_1$ has finite total mass, then we can look at $(\boldsymbol{\chi}(n),n\in\Z^+)$ as a branching random walk, and use Biggins' theorem from \cite{Biggins92}, which gives the $L^p$-convergence under two conditions: one is $\dot \kappa(q)<\kappa(q)/q$, the other is an integral condition which is satisfied for $p>1$ small enough. Consider then $q_m>0$, the location which minimizes $\kappa$ and recall Proposition \ref{trunckappapos}.  Since $\kappa$ is strictly convex, we then have $\dot \kappa(q)>0$ for $q>q_m$, and $\dot \kappa(q)<0$ for $q<q_m$. 
Using continuity and the facts that $\kappa(q_m)>0$ and $\dot \kappa(q_m)=0$, we obtain that $\dot \kappa(q)<\kappa(q)/q$ for all $q$ close enough to $q_m$.
\end{proof}

We now consider such a $q=q_{\pm}$, and thus assume that the martingale $M_q$ converges in $L^p$ for some $p>1$.

\begin{lemma}\label{lem:poslim} The limit $M_q(\infty)$ of $M_q$ is equal to zero on the event where $\boldsymbol{\chi}$ goes extinct, and is strictly positive on the event where $\boldsymbol{\chi}$ does not become extinct.
\end{lemma}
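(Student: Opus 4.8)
The plan is to exploit the branching structure of $\boldsymbol{\chi}$ together with the $L^p$-convergence already secured in Proposition~\ref{prop:choiceq}. Write $M_q(\infty)$ for the limit, which exists a.s.\ and in $L^p$. On the extinction event there is $t_0$ with no particles alive after $t_0$, so $M_q(t)=0$ for $t\geq t_0$, giving $M_q(\infty)=0$ there; this is the trivial direction. For the converse I would argue that $\{M_q(\infty)=0\}$ has the same probability as the extinction event, and then identify the two events.

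First I would set up the standard branching recursion. Decompose $\boldsymbol{\chi}$ at the first branching time $\delta_{\emptyset}=T_{\emptyset}$ (recall $k=0$ is allowed, and if $\zeta_{\emptyset}<T_{\emptyset}$ the process is killed and goes extinct immediately). Conditionally on $(\xi_\emptyset, T_\emptyset, J_\emptyset)$, the subtrees rooted at $\emptyset 0$ and $\emptyset 1$ are independent copies of the whole process started from the masses $\chi_\emptyset(T_\emptyset^-)\e^{J_\emptyset}$ and $\chi_\emptyset(T_\emptyset^-)(1-\e^{J_\emptyset})$ respectively; using the scaling $M_q^{(x)}(\infty) \overset{d}{=} x^q M_q(\infty)$ (which follows from $\chi_{\mathbf u}\mapsto x\chi_{\mathbf u}$ under $\pr_x$ and the definition of $M_q$), one gets
\[
M_q(\infty) = \e^{-T_\emptyset \kappa(q)}\,\chi_\emptyset(T_\emptyset^-)^q\Big( \e^{qJ_\emptyset} M_q^{(0)}(\infty) + (1-\e^{J_\emptyset})^q M_q^{(1)}(\infty)\Big),
\]
where $M_q^{(0)}(\infty), M_q^{(1)}(\infty)$ are i.i.d.\ copies of $M_q(\infty)$, independent of the prefactor, which is a.s.\ strictly positive on the event $\{T_\emptyset<\zeta_\emptyset\}$. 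Let $\rho = \pr(M_q(\infty)=0)$. The display shows $\rho = \pr(\text{killed before }T_\emptyset) + \pr(T_\emptyset<\zeta_\emptyset)\,\rho^2$, which is exactly the fixed-point equation satisfied by the extinction probability $p$ of the Galton--Watson process counting particles. Since $\E[M_q(\infty)]=1$ by $L^1$-convergence (using that $M_q$ is a nonnegative martingale converging in $L^p$, $p>1$), we have $\rho<1$; and the offspring mean exceeds $1$ under $(\mathbf H)$ (by letting $q\to 0^+$ in \eqref{Emmo}, or directly since $\kappa(0^+)>0$), so $p<1$ as well. The quadratic $s = c_0 + c_1 s^2$ with $c_0+c_1=1$, $c_1$ being the non-killing probability, has exactly two roots, $1$ and $p$; hence $\rho=p$.

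It remains to upgrade this equality of probabilities to the a.s.\ identity $\{M_q(\infty)=0\} = \{\boldsymbol{\chi}\text{ goes extinct}\}$. The inclusion $\supseteq$ is the trivial direction above. For the reverse, note that on the survival event the recursion can be iterated down any infinite line of descent: if $M_q(\infty)=0$ then, by the display and strict positivity of the prefactors, $M_q^{(i)}(\infty)=0$ for both children, and inductively $M_q^{(\mathbf u)}(\infty)=0$ for every particle $\mathbf u$ ever born. But the subtrees attached along an infinite ray each independently survive with probability $1-p>0$ and then, by the already-established $\pr(M_q^{(\mathbf u)}(\infty)=0)=p<1$ together with a Borel--Cantelli / branching-property argument, some descendant subtree has strictly positive limit — contradiction. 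More cleanly: $\pr(\text{survival}, M_q(\infty)=0) = \rho - p = 0$, since $\{M_q(\infty)=0\}\subseteq\{\text{extinction}\}$ would follow once we know $\pr(M_q(\infty)=0)=\pr(\text{extinction})$ and $\{M_q(\infty)=0\}\supseteq\{\text{extinction}\}$ — indeed two events, one containing the other, with equal probability, coincide a.s. So in fact the equality of probabilities $\rho=p$ is \emph{all} that is needed, given the easy inclusion.

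The main obstacle is the justification of the distributional recursion with the correct independence structure and the scaling $M_q^{(x)}(\infty)=x^q M_q(\infty)$ in law — i.e.\ making precise that the martingale limit "restarted" at a branch point is a scaled independent copy. This requires the branching property of the homogeneous growth-fragmentation (as a branching L\'evy process, cf.\ \cite{Bertoincompfrag}) applied at the stopping time $T_\emptyset$, plus an interchange of limit and sum that is licensed by the $L^p$-convergence of $M_q$ on each subtree. Everything else — the quadratic fixed-point analysis, $\E[M_q(\infty)]=1$, and the "equal probability plus inclusion implies a.s.\ equality" step — is routine.
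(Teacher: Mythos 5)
Your proposal is correct and follows essentially the same route as the paper: both proofs exploit that $\{M_q(\infty)=0\}$ is an inherited (hereditary) event for the embedded Galton--Watson process, so that $\pr(M_q(\infty)=0)$ is either $1$ or the extinction probability $p$; rule out $1$ by the $L^1$-convergence $\E[M_q(\infty)]=1$; and conclude via the ``containment plus equal probability implies a.s. coincidence'' argument. The only difference is cosmetic: the paper invokes the standard hereditary-event theorem by decomposing at integer times, while you decompose at the first branching time $T_\emptyset$ and derive the quadratic fixed-point equation explicitly (the scaling and independence you flag as the ``main obstacle'' are immediate from the i.i.d.\ construction of the triples $(\xi_{\mathbf u},T_{\mathbf u},J_{\mathbf u})$).
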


%\begin{proof} We adapt an argument from \cite{Steph13}. For $n\in\Z_+$, recall that $Z_n$ be the number of particles of $\mathbf{X}(n)$ and that, since $\alpha=0$ here, $(Z_n,n\in\Z_+)$ is a Galton-Watson process. Moreover, the event $\{M_q(\infty)=0\}$ is hereditary for this Galton-Watson process in the sense that $M_q(\infty)=0$ if and only if for every individual alive at time $1$, the analoguous martingale corresponding to the descendants of this individual also has limit zero. By classical arguments, $\pr\big[M_q(\infty)=0\big]$ is then a fixed point of the offspring distribution's generating function, and must be equal to either $1$ or the probability of extinction. However the first case is excluded, since, by $L^1$-convergence, $\mathbb{E}[M_q(\infty)]>0$. Since the event $\{M_q(\infty)=0\}$ contains the event of extinction and they have the same probability, these events are almost surely equal.
%\end{proof} 

\begin{proof} We adapt a fairly standard argument. For $n\in\Z_+$, recall that $Z_n$ be the number of particles of $\boldsymbol{\chi}(n)$ and that $(Z_n,n\in\Z_+)$ is a Galton-Watson process. Moreover, the event $\{M_q(\infty)=0\}$ is hereditary for this Galton-Watson process in the sense that $M_q(\infty)=0$ if and only if for every individual alive at time $1$, the analoguous additive martingale corresponding to the descendants of this individual also has limit zero. Its probability must then be equal to either $1$ or the probability of extinction. However the first case is excluded, since we have $\mathbb{E}[M_q(\infty)]>0$ by $L^1$-convergence. The event $\{M_q(\infty)=0\}$ then contains the event of extinction and they have the same probability, and they must almost surely be equal.
\end{proof}

We next use the additive martingale $M_q$ for $q=q_{\pm}$ to define two tilted probability measures, $\Q^+$ and $\Q^-$. Formally, denote by ${\mathcal T}$ the space of families of triples $\left((\xi_{\mathbf u}, T_{\mathbf u}, J_{\mathbf u}): {\mathbf u}\in{\mathcal U}\right)$ where $\xi_{\mathbf u}$ is a c\`adl\`ag real path (possibly with finite lifetime), $T_{\mathbf u}\in(0,\infty)$ and $J_{\mathbf u}\in(-\infty,0)$, and recall that $\pr$ is the probability measure on ${\mathcal T}$ under which the triples $(\xi_{\mathbf u}, T_{\mathbf u}, J_{\mathbf u})$ for  ${\mathbf u}\in{\mathcal U}$ are i.i.d.; more precisely each $\xi_{\mathbf u}$ is a spectrally negative L\'evy process with Laplace exponent $\Psi_2$, $T_{\mathbf u}$ has the exponential law with parameter $\Lambda_1((-\infty,0))$,  $J_{\mathbf u}$ has the law $\Lambda_1(\cdot)/\Lambda_1((-\infty,0))$, and finally $\xi_{\mathbf u}, T_{\mathbf u}, J_{\mathbf u}$ are independent. Recall also the construction of the processes $(\chi_{\mathbf u}(t): \beta_{\mathbf u}\leq t < \delta_{\mathbf u})$ from the preceding, and consider the filtration
$${\mathcal F}(t)=\sigma\left({\bf 1}_{\{\beta_{\mathbf u}\leq s < \delta_{\mathbf u}\}}\chi_{\mathbf u}(s)  : 0\leq s \leq t,   {\mathbf u}\in{\mathcal U}\right).$$
We endow the infinite binary tree with its discrete sigma-algebra (i.e. its power set) ${\mathcal P}({\mathcal U})$
and for every $t\geq 0$, we define two tilted probability measures $\Q^{\pm,t}$  on $\left({\mathcal T}\times {\mathcal U}, {\mathcal F}(t)\otimes {\mathcal P}({\mathcal U})\right)$ 
 by the following formula:
\[\Q^{\pm,t}\Big[A\times\{\mathbf{u}\}\Big]=\e^{-t\kappa(q_{\pm})}\E\Big[\chi^{q_{\pm}}_{\mathbf{u}}(t){\bf1}_A \Big], \qquad A\in{\mathcal F}(t), {\mathbf u}\in{\mathcal U}.\]
(recall that $\chi_{\mathbf{u}}(t)$ is implicitly assumed to be $0$ if we do not have $\beta_{\mathbf u}\leq t < \delta_{\mathbf u}$.) In particular, $\Q^{\pm,t}$ is the joint law of a growth-fragmentation $\boldsymbol{\chi}$ observed up to time $t$ and a randomly tagged particle ${\mathbf u}^*\in{\mathcal U}$ which is alive at time $t$, $\Q^{\pm,t}$-a.s. The following compatibility property of the laws $(\Q^{\pm,t},t\geq0)$ follows immediately from the martingale property of $M_{q_{\pm}}$ and the branching property of homogeneous growth-fragmentations. 

\begin{prop} The measures $(\Q^{\pm,t},t\geq0)$ are compatible in the sense that, for $s\leq t$, if  $\big((\boldsymbol{\chi}(r),r\in[0,t]),\mathbf{u}^*\big)$ has distribution $\Q^{\pm,t}$, then, letting $\mathbf{v}^*$ be the ancestor of $\mathbf{u}^*$ which is alive at time $s$,  $\big((\boldsymbol{\chi}(r),r\in[0,s]),\mathbf{v}^*\big)$ has distribution $\Q^{\pm,s}$.
\end{prop}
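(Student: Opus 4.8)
The plan is to verify the compatibility directly from the change-of-measure formula defining $\Q^{\pm,t}$, using the martingale property of $M_{q_\pm}$ together with the branching property of the homogeneous growth-fragmentation $\boldsymbol\chi$. Fix $s\le t$. It suffices to check that for every $A\in{\mathcal F}(s)$ and every ${\mathbf v}\in{\mathcal U}$ one has
\[
\Q^{\pm,t}\big[\,A\times\{{\mathbf w}\in{\mathcal U}:\ {\mathbf w}\ \text{has ancestor}\ {\mathbf v}\ \text{alive at time}\ s\,\}\,\big]
=\Q^{\pm,s}\big[A\times\{{\mathbf v}\}\big],
\]
since the events on the left, as ${\mathbf v}$ ranges over ${\mathcal U}$, partition the set of $({\mathbf w})$ alive at time $t$ whose ancestral line is well-defined, and ${\mathcal F}(s)\otimes{\mathcal P}({\mathcal U})$ is generated by such rectangles. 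Unfolding the definition, the left-hand side equals
\[
\e^{-t\kappa(q_\pm)}\,\E\!\Big[\,{\bf 1}_A\sum_{{\mathbf w}:\ {\mathbf v}\preceq{\mathbf w},\ \beta_{\mathbf w}\le t<\delta_{\mathbf w}}\chi_{\mathbf w}^{q_\pm}(t)\Big],
\]
where $\preceq$ denotes the genealogical order and the sum is over descendants of ${\mathbf v}$ (including ${\mathbf v}$ itself) alive at time $t$; the point is that "${\mathbf w}$ has ancestor ${\mathbf v}$ alive at time $s$'' is, on the event $\{\beta_{\mathbf w}\le t\}$ and recalling $s\le t$, exactly "${\mathbf v}\preceq{\mathbf w}$ and $\beta_{\mathbf v}\le s<\delta_{\mathbf v}$''.

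The key step is then to condition on ${\mathcal F}(s)$ and apply the branching property: on the event $\{\beta_{\mathbf v}\le s<\delta_{\mathbf v}\}$, the collection of descendants of ${\mathbf v}$ alive at time $t$, rescaled by $\chi_{\mathbf v}(s)$, is distributed as an independent copy of $\boldsymbol\chi$ run for time $t-s$ (this is precisely the branching property of homogeneous growth-fragmentations / compensated fragmentations used to build $M_q$ in \cite{Bertoincompfrag}). Hence, by \eqref{Emmo} applied to that copy,
\[
\E\Big[\sum_{{\mathbf v}\preceq{\mathbf w},\ \beta_{\mathbf w}\le t<\delta_{\mathbf w}}\chi_{\mathbf w}^{q_\pm}(t)\ \Big|\ {\mathcal F}(s)\Big]
={\bf 1}_{\{\beta_{\mathbf v}\le s<\delta_{\mathbf v}\}}\,\chi_{\mathbf v}^{q_\pm}(s)\,\e^{(t-s)\kappa(q_\pm)},
\]
which is of course the same computation that makes $M_{q_\pm}$ a martingale. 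Since $A\in{\mathcal F}(s)$, plugging this in gives
\[
\e^{-t\kappa(q_\pm)}\,\E\!\Big[{\bf 1}_A\,{\bf 1}_{\{\beta_{\mathbf v}\le s<\delta_{\mathbf v}\}}\,\chi_{\mathbf v}^{q_\pm}(s)\,\e^{(t-s)\kappa(q_\pm)}\Big]
=\e^{-s\kappa(q_\pm)}\,\E\!\big[{\bf 1}_A\,\chi_{\mathbf v}^{q_\pm}(s)\big]
=\Q^{\pm,s}\big[A\times\{{\mathbf v}\}\big],
\]
using that $\chi_{\mathbf v}(s)$ is implicitly $0$ off $\{\beta_{\mathbf v}\le s<\delta_{\mathbf v}\}$. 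This is the asserted identity, and it yields the stated description: if $\big((\boldsymbol\chi(r),r\in[0,t]),{\mathbf u}^*\big)\sim\Q^{\pm,t}$ and ${\mathbf v}^*$ is the ancestor of ${\mathbf u}^*$ alive at time $s$, then $\big((\boldsymbol\chi(r),r\in[0,s]),{\mathbf v}^*\big)\sim\Q^{\pm,s}$.

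The only mild subtlety — the "main obstacle'', though it is minor here — is bookkeeping around the genealogy and the implicit "alive'' conventions: one must make sure that on $\{\beta_{\mathbf w}\le t<\delta_{\mathbf w}\}$ the ancestor of ${\mathbf w}$ alive at time $s$ is well-defined and equals the unique ${\mathbf v}\preceq{\mathbf w}$ with $\beta_{\mathbf v}\le s<\delta_{\mathbf v}$, and that the $\Q^{\pm,t}$-a.s. existence of such an ancestor (i.e. the fact that ${\mathbf u}^*$ is alive at time $t$ and hence has a bona fide ancestral line down to time $s$) holds — which is automatic because $\chi_{{\mathbf u}^*}^{q_\pm}(t)>0$ is forced under $\Q^{\pm,t}$ by the definition of the tilt. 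No integrability issue arises since $\kappa(q_\pm)<\infty$ by Proposition \ref{prop:choiceq}, so \eqref{Emmo} applies and all the expectations above are finite.
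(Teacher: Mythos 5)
Your argument is correct and is exactly the one the paper has in mind: the paper gives no proof, stating only that the compatibility ``follows immediately from the martingale property of $M_{q_\pm}$ and the branching property of homogeneous growth-fragmentations,'' and your computation is a faithful unpacking of that one-liner (condition on $\mathcal{F}(s)$, apply the branching property, invoke \eqref{Emmo}). One cosmetic point: in your first display the indicator $\mathbf{1}_{\{\beta_{\mathbf v}\leq s<\delta_{\mathbf v}\}}$ should already appear inside the expectation, as your own subsequent explanation and conditional-expectation identity make clear.
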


By Kolmogorov's theorem, there exists a probability measure $\Q^{\pm}$ describing the joint distribution of a growth-fragmentation $\boldsymbol{\chi}=(\boldsymbol{\chi}(t): t\geq 0)$
and a selected \emph{line of descent}  $({\mathbf u}^*(t): t\geq 0)$, that is a process with values in ${\mathcal U}$ with the property  that for every $0\leq s \leq t$, the particle ${\mathbf u}^*(s)$ is the ancestor at time $s$ of the particle ${\mathbf u}^*(t)$, such that the distribution of $\big((\boldsymbol{\chi}(r),r\in[0,t]),\mathbf{u}^*(t)\big)$ under $\Q^{\pm}$ is $\Q^{\pm,t}$.

 Under $\Q^{\pm}$, the selected line of descent $(\mathbf{u}^*(t),t\geq 0)$ serves as a spine of the process, and if we follow it we get a particular Markov process. We write $\chi^*(t)=\chi_{\mathbf{u}^*(t)}(t)$, call $\chi^*$ the \emph{selected particle},  and claim:
 
\begin{prop} \label{P24} We work under $\Q^{\pm}$ and for $t\geq 0$, we let $\xi^*(t)=\log\big(\chi^*(t)\big).$ The process $\big(\xi^*(t),t\geq 0\big)$ is a L\'evy process with Laplace exponent $\Phi_{\pm}(\cdot)=\kappa(q_{\pm}+\cdot)-\kappa(q_{\pm})$.\end{prop}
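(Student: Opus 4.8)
The strategy is to compute the Laplace transform of $\xi^*(t) = \log \chi^*(t)$ directly from the tilting formula defining $\Q^{\pm}$, and then to recognize that the resulting expression is multiplicative in $t$, which — together with the Markov/independent-increments structure inherited from the spinal decomposition — identifies $\xi^*$ as a L\'evy process with the claimed exponent. Concretely, for $\lambda \geq 0$ and $t \geq 0$, I would write
\[
\E_{\Q^{\pm}}\bigl[\e^{\lambda \xi^*(t)}\bigr] = \E_{\Q^{\pm}}\bigl[(\chi^*(t))^{\lambda}\bigr] = \sum_{\mathbf{u}\in\mathcal U} \E_{\Q^{\pm}}\bigl[(\chi_{\mathbf u}(t))^{\lambda}\,{\bf 1}_{\{\mathbf u^*(t)=\mathbf u\}}\bigr],
\]
and then apply the definition of $\Q^{\pm,t}$ (which governs $\big((\boldsymbol\chi(r),r\le t),\mathbf u^*(t)\big)$) to each term, turning $\E_{\Q^{\pm}}[\,\cdot\,{\bf 1}_{\{\mathbf u^*(t)=\mathbf u\}}]$ into $\e^{-t\kappa(q_{\pm})}\E_{\pr}[\chi_{\mathbf u}^{q_{\pm}}(t)\,\cdot\,]$. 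Summing over $\mathbf u$ and using the key moment identity \eqref{Emmo} for the homogeneous growth-fragmentation, this gives
\[
\E_{\Q^{\pm}}\bigl[\e^{\lambda\xi^*(t)}\bigr] = \e^{-t\kappa(q_{\pm})}\,\E_{\pr}\Bigl[\sum_{\mathbf u\in\mathcal U}\chi_{\mathbf u}^{q_{\pm}+\lambda}(t)\Bigr] = \e^{-t\kappa(q_{\pm})}\,\e^{t\kappa(q_{\pm}+\lambda)} = \e^{t\Phi_{\pm}(\lambda)},
\]
valid whenever $\kappa(q_{\pm}+\lambda)<\infty$; since $\kappa$ is finite on $[q_{\pm},\infty)$ (recall $q_{\pm}>0$ and $\kappa$ is finite on a half-line reaching down to $\underline q\le q_m\le q_{\pm}$ at least when finite, and in any case finite on $[2,\infty)$ — more care may be needed if $q_{\pm}<\underline q$, but Proposition \ref{prop:choiceq} places $q_{\pm}$ near $q_m>\underline q$), this holds for all $\lambda\ge 0$, and one extends to $\lambda$ in a complex strip or to $\lambda<0$ by the same computation where it makes sense.

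Having the one-dimensional marginals in the form $\e^{t\Phi_{\pm}(\lambda)}$, the remaining task is to upgrade this to the statement that $\xi^*$ has stationary and independent increments. Here I would invoke the compatibility/branching structure already established: under $\Q^{\pm}$ the selected line of descent is a genuine Markov process, and the spinal decomposition says that, conditionally on the trajectory up to time $s$, the future evolution of the tagged particle is governed by a copy of the same tilted dynamics started afresh from $\chi^*(s)$ — this is precisely the content of the compatibility proposition combined with the branching property of homogeneous growth-fragmentations and the scaling/homogeneity (multiplicativity) of the construction in the $\alpha=0$ case. Thus $\xi^*(s+t)-\xi^*(s)$ is independent of $\mathcal F(s)$ and distributed as $\xi^*(t)$ under $\Q^{\pm}$, which with the computed marginal transform $\e^{t\Phi_{\pm}(\cdot)}$ gives exactly a L\'evy process with Laplace exponent $\Phi_{\pm}$.

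The main obstacle I anticipate is not the marginal computation — that is essentially a one-line consequence of \eqref{Emmo} and the definition of the tilt — but rather making the independent-increments argument fully rigorous, i.e. pinning down exactly what "follow the spine" means as a Markov process and checking that the $h$-transform by $M_q$ really does produce, along the spine, a clean change of measure of the underlying L\'evy process $\xi_\emptyset$ of the form $\exp(q\xi(t)-t\kappa(q))$ times the original law, rather than something entangled with the branching. Equivalently, one must verify that the Doob $h$-transform of the homogeneous growth-fragmentation by the additive martingale, when restricted to the tagged line, coincides with the Esscher transform of a single L\'evy process; this is standard in the many-to-one / spine literature but requires carefully disentangling the contribution of the off-spine subtrees (which integrate out to $1$ by the martingale property) from that of the spine itself. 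Once that bookkeeping is in place, the identification of $\Phi_{\pm}(\lambda)=\kappa(q_{\pm}+\lambda)-\kappa(q_{\pm})$ as a valid Laplace exponent (it is convex, vanishes at $0$, and is finite near $0$ since $\kappa$ is finite on $[q_{\pm},\infty)$) completes the proof.
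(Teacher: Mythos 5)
Your marginal computation is exactly the paper's, and you also correctly identify that the remaining work is to show stationarity and independence of increments via the branching property. However, you stop short of carrying that step out, and it is in fact the main content of the paper's proof (the marginal computation is the shorter half). You also overestimate its difficulty: the "careful disentangling of off-spine subtrees" that you anticipate never arises, because the paper does not route this through the Lyons--Pemantle--Peres spine decomposition at all. It is a direct two-line computation under the tilted measure. Writing $q=q_{\pm}$ and taking test functions $f,G$, one expands
\[
\Q\big[f(\xi^*(t)-\xi^*(s))\,G(\xi^*(r),r\leq s)\big]
= \e^{-t\kappa(q)}\,\E\Big[\sum_{\mathbf u,\mathbf v}\chi^q_{\mathbf u}(s)\,G\big(\log\chi_{\mathbf u(r)}(r),r\leq s\big)\Big(\tfrac{\chi_{\mathbf{uv}}(t)}{\chi_{\mathbf u}(s)}\Big)^q f\Big(\log\tfrac{\chi_{\mathbf{uv}}(t)}{\chi_{\mathbf u}(s)}\Big)\Big],
\]
the key point being that the weight $\chi_{\mathbf{uv}}^q(t)$ factorizes exactly as $\chi^q_{\mathbf u}(s)\cdot(\chi_{\mathbf{uv}}(t)/\chi_{\mathbf u}(s))^q$, which dovetails with the branching property at time $s$: conditionally on $\mathcal F(s)$, the inner sum over $\mathbf v$ is an independent copy of $\sum_{\mathbf v}\chi_{\mathbf v}^q(t-s)f(\log\chi_{\mathbf v}(t-s))$ under $\pr_1$ (by scaling, the conditioning on the starting size $\chi_{\mathbf u}(s)$ drops out). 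The expectation therefore splits into a product, giving $\Q[f(\xi^*(t-s))]\cdot\Q[G(\xi^*(r),r\leq s)]$ directly; no appeal to Lemma~\ref{L2} or to the martingale property of the subtree contributions is needed. So your plan is sound and your marginal identity $\Q[\e^{\lambda\xi^*(t)}]=\e^{t\Phi_{\pm}(\lambda)}$ is correct, but you should actually write out this factorization rather than defer it, since it is both the crux and, as it turns out, the easy part.
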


\begin{proof} For the sake of simplicity, we drop $\pm$ from the notation and simply write $q=q_{\pm}$, $\Q=\Q^{\pm}$. 

Let us show the independence and stationarity of the increments of $\xi^*$. Let $0\leq s\leq t$, we have, for appropriate functions $f$ and $G$, and using the branching property at time $s$,
\begin{align*}
&\Q\big[f(\xi^*(t)-\xi^*(s))G(\xi^*(r),r\leq s))\big] \\ 
&=\e^{-t\kappa(q)}\E\Big[\sum_{\mathbf{u}\in\mathcal{U}}\chi^q_{\mathbf{u}}(s)G\big(\log(\chi_{\mathbf{u}(r)}(r)),r\leq s\big)\sum_{\mathbf{v}\in\mathcal{U}}\left( \frac{\chi_{\mathbf{uv}}(t)}{\chi_{\mathbf{u}}(s)}\right)^qf\big(\log(\chi_{\mathbf{uv}}(t)-\log(\chi_{\mathbf{u}(s)}))\big)\Big] \\
&=\e^{-s \kappa(q)}\E\Big[\sum_{\mathbf{u}\in\mathcal{U}}\chi^q_{\mathbf{u}}(s)G\big(\log(\chi_{\mathbf{u}(r)}(r)),r\leq s\big)\Big]\e^{-(t-s)\kappa(q)}\E\Big[\sum_{\mathbf{v}\in\mathcal{U}}\chi^q_{\mathbf{v}}(t-s) f\big(\log(\chi_{\mathbf{v}}(t-s))\big)\Big]\\
&=\Q\big[f(\xi^*(t-s))\big]\Q\big[G(\xi^*(r),r\leq s)\big].
\end{align*}

We then only need to check that the moments match up with the announced Laplace exponent. For $t\geq0$ and $p\geq0$, we have
\[\Q[\e^{p\xi^*(t)}]=\e^{-t\kappa(q)}\E\Big[\sum_{\mathbf{u}\in\mathcal{U}}\chi^q_{\mathbf{u}}(t)\chi^p_{\mathbf{u}}(t)\Big]=\e^{t(\kappa(p+q)-\kappa(q))},\]
which ends the proof.
\end{proof}

In their most influential contribution, Lyons {\it et al.}  \cite{LPP} pointed at the well-known spine-decomposition of branching processes under the tilted probability measure that  is  induced by an additive martingale. Roughly speaking, it states that the descent of particles who are sibling of the spine evolve according to independent branching processes with the original (i.e. non-tilted) distribution. We shall now state a version of this spine decomposition in the setting of homogeneous growth-fragmentations. 

We work under $\Q=\Q^{\pm}$ and write ${\mathbf u}^*(\infty)=u^*_1u^*_2\cdots$ for the spine, that is the infinite word induced by the selected line of descent. For every $n\geq 1$, we denote by
$\boldsymbol{\chi}_n$ the sub-growth-fragmentation generated by the $n$-th sibling particle of the spine, namely  ${\mathbf v}^*_n=u^*_1\cdots u^*_{n-1}(1-u^*_n)$. Specifically,
 write $s=\beta_{{\mathbf v}^*_n}$ for the birth-time of that particle and  set for $t\geq 0$
$$\boldsymbol{\chi}_n(t)=\{\!\!\{\chi_{{\mathbf v}^*_n{\mathbf{u}}}(t+s):\;\mathbf{u}\in\mathcal{U}\text{ and }
\beta_{{\mathbf v}^*_n{\mathbf{u}}} \leq t+s<\delta_{{\mathbf v}^*_n{\mathbf{u}}}\}\!\!\}.
$$

\begin{lemma}\label{L2} Let $(x_n)_{n\geq 1}$ be a sequence of positive real numbers. 
Under $\Q^{\pm}$ and conditionally on $\chi_{{\mathbf v}^*_n}(\beta_{{\mathbf v}^*_n})=x_n$ for $n\geq 1$, the sub-growth-fragmentations $\boldsymbol{\chi}_n$ are independent and each has the law $\pr_{x_n}$. 
\end{lemma}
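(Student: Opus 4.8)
The plan is to verify that the spine-decomposition statement is really just an unpacking of the change of measure formula defining $\Q^{\pm}$, combined with the branching property of $\boldsymbol{\chi}$. First I would fix $n\geq 1$ and condition on the ``history along the spine'' up to generation $n$: namely on the triples $(\xi_{{\mathbf u}^*_1\cdots {\mathbf u}^*_k}, T_{{\mathbf u}^*_1\cdots {\mathbf u}^*_k}, J_{{\mathbf u}^*_1\cdots {\mathbf u}^*_k})$ for $k\leq n$, which in particular determine the identities of the first $n$ spine particles, the first $n$ sibling particles ${\mathbf v}^*_1,\ldots,{\mathbf v}^*_n$, their birth-times $\beta_{{\mathbf v}^*_k}$ and their initial masses $\chi_{{\mathbf v}^*_k}(\beta_{{\mathbf v}^*_k})$. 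The key observation is that, under $\pr$, the sub-families of triples $(\xi_{{\mathbf v}^*_k{\mathbf u}}, T_{{\mathbf v}^*_k{\mathbf u}}, J_{{\mathbf v}^*_k{\mathbf u}})_{{\mathbf u}\in{\mathcal U}}$, for $k=1,\dots,n$, are i.i.d.\ copies of the full driving family, independent of the spine data just described; this is simply because distinct ${\mathbf v}^*_k$'s have disjoint sets of descendants in ${\mathcal U}$ and $\pr$ makes all triples independent.

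The main technical point is to check that the tilting that defines $\Q^{\pm}$ does not disturb this conditional independence. For this I would go back to the finite-horizon measures $\Q^{\pm,t}$ and write, for $A_k\in{\mathcal F}(t)$ events depending only on $\boldsymbol{\chi}_k$,
\[
\Q^{\pm,t}\Big[\{{\mathbf u}^*=\mathbf{w}\}\cap\bigcap_{k=1}^n A_k\Big]
= \e^{-t\kappa(q_{\pm})}\,\E\Big[\chi^{q_{\pm}}_{\mathbf{w}}(t)\,{\bf 1}_{\bigcap_k A_k}\Big],
\]
where $\mathbf{w}=w_1\cdots w_m$ is a fixed word of length $m\geq n$ and the sibling particles ${\mathbf v}^*_1,\dots,{\mathbf v}^*_n$ of $\mathbf{w}$ are thereby fixed too. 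Using the recursive construction of $\boldsymbol{\chi}$, the mass $\chi_{\mathbf w}(t)$ factorizes along the spine: $\chi_{\mathbf w}(t)$ is a product over $k\leq m$ of factors determined by the $k$-th spine triple (the Lévy path $\xi$, the splitting variable $J$, and the indicator that the branching occurred before the killing time), and crucially $\chi^{q_\pm}_{\mathbf w}(t)$ involves only the spine triples, not the sibling sub-families. Hence in the expectation the factor $\chi^{q_{\pm}}_{\mathbf w}(t)$ is independent (under $\pr$) of $\bigcap_k A_k$ given the spine data, and one can integrate out the sibling sub-families one at a time. Conditioning further on the initial masses $\chi_{{\mathbf v}^*_k}(\beta_{{\mathbf v}^*_k})=x_k$, each $A_k$ is an event about a homogeneous growth-fragmentation started from $x_k$, whose law under $\pr$ is exactly $\pr_{x_k}$ by the construction recalled in the excerpt (shifting $\xi_\emptyset(0)=\ln x_k$ and restarting the clock at $\beta_{{\mathbf v}^*_k}$); the $\pr$-independence across $k$ of the sibling sub-families then gives the claimed conditional independence and the product form $\bigotimes_{k=1}^n \pr_{x_k}$.

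Finally I would pass from the finite-horizon statement to the statement under $\Q^{\pm}$ itself: since the events considered are $\sigma\big(\bigcup_t {\mathcal F}(t)\big)$-measurable and the $\Q^{\pm,t}$ are the consistent marginals of $\Q^{\pm}$ by Kolmogorov's theorem, a monotone-class argument extends the identity to all of ${\mathcal F}(\infty)$ and to all $n$ simultaneously. The step I expect to be the main obstacle is bookkeeping the factorization of $\chi_{\mathbf w}(t)$ cleanly enough to see that the Radon–Nikodym factor $\chi^{q_{\pm}}_{\mathbf w}(t)$ genuinely depends only on the spine triples and the spine killing-times, so that the sibling sub-families can be integrated out with the untilted law $\pr$; once that is isolated, the rest is the standard spine argument of \cite{LPP} transcribed to the present setting, using the branching property of homogeneous growth-fragmentations and the definition of $\pr_x$ already given.
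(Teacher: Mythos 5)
Your argument is correct and is precisely the standard Lyons--Pemantle--Peres spine argument that the paper invokes without detail (``the proof of Lemma \ref{L2} is nowadays standard and follows from calculations similar to those performed in the proof of Proposition \ref{P24}''). The crux you identify — that the Radon--Nikodym factor $\chi^{q_\pm}_{\mathbf w}(t)$ is measurable with respect to the triples along the ancestral line of $\mathbf w$ alone, so that conditioning on the spine leaves the sibling sub-families with their untilted i.i.d.\ law $\pr_{x_k}$ — is exactly the missing computation, and your passage from the finite-horizon marginals $\Q^{\pm,t}$ to $\Q^{\pm}$ via consistency and a monotone-class argument is the right way to finish.
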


More than 20 years after the publication of  \cite{LPP}, the proof of Lemma \ref{L2} is nowadays standard and follows from calculations similar to those performed in the proof of Proposition \ref{P24}. Details are left to the interested readers.

%\begin{prop}\label{prop:exp} Assume $(\mathbf{H})$ and $\alpha=0$, let $q$ be as in Proposition \ref{prop:choiceq}, and let $c_q=\dot \kappa(q)$. There exists $c>0$ such that, almost surely, there is a genealogical branch $(\mathbf{u}(t),t\geq0)$ (in the sense that $\mathbf{u}(s)$ is a parent of $\mathbf{u}(t)$ for $s\leq t$) for which
%\[\underset{t\to\infty}\lim \frac{\log{(X_{\mathbf{u}(t)}}{t}=c_q.\]
%\end{prop}
%\begin{proof} Under $\pr_q$, this is clear from the above proposition. Note that if we "forget" the selected line of descent $\mathbf{u}(t),t\geq 0$, we can still reconstruct it randomly from $\mathbf{X}$ by choosing at each branching spot one of the two particles with probability proportional to their mass, to the $q$-th power. (TODO: detailler...)

%Next, notice that the distribution of $(\mathbf{X(t)},t\geq0)$ under $\pr_q$ is equivalent to $\pr$. Indeed, if we limit %ourselves to times $s\leq t$ for finite $t$, then the density of the first one with respect to the second one is exactly %the martingale $M_q(t)$. By $L^1$ convergence of this martingale, we thus get the limit $M_q(\infty)$ as a density. %Since 
%\end{proof}

\subsection{Starting near a boundary}

We recall that ${\mathbf X}$ denotes the growth-fragmentation process obtained from $\boldsymbol{\chi}$ by the Lamperti transformation.  
In this section, we shall mainly study effects of the Lamperti transformation under the tilted probability laws $\Q^{\pm}$.

We first consider the Lamperti transformation applied to the selected particle $\big(\chi^*(t): t\geq 0\big)$. That is, we introduce the  time-change defined by
	\[\tau^*(t) = \inf \Big\{s\geq 0:  \int_0^s \chi^*(r)^{-\alpha}\mathrm{d}r >t\Big\},\]
and the selected particle for the self-similar growth-fragmentation ${\mathbf X}$ is given by
 $X^*(t)=\chi^*(\tau^*(t))$.
We stress that $X^*$ has lifetime 
$$\zeta^*=\int_0^{\infty} \chi^{-\alpha}_{\mathbf{u}^*(r)}(r)\mathrm{d}r,$$
as $\tau^*(t)<\infty$ if and only if $t< \zeta^*$.  
Recall from Proposition \ref{P24} that under $\Q^{\pm}$, $\xi^*=\log\big(\chi^*\big)$ is a L\'evy process with Laplace exponent $\Phi_{\pm}$, and thus 
$X^*$ is a self-similar Markov process with characteristics $(\Phi_{\pm},\alpha)$.

Next observe that $\Phi_{\pm}(0)= 0$ and $\dot\Phi_{\pm}(0)=\dot \kappa (q_{\pm})$.
So under $\Q^+$ (respectively, under $\Q^-$) we see that the L\'evy process $\xi^*$ has no killing and that its expectation is positive (respectively, negative). We readily deduce the following statement from the law of large numbers.

\begin{cor} \label{C1} {\rm (i)} Suppose $\alpha>0$.
Under $\Q^+$, $\zeta^*$
is an a.s. finite random variable and 
$$\lim_{t\to \zeta^*}X^*(t)=+\infty.$$

\noindent {\rm (ii)} Suppose $\alpha<0$.
Under $\Q^-$, $\zeta^*$
is an a.s. finite random variable and 
$$\lim_{t\to \zeta^*}X^*(t)=0.$$
\end{cor}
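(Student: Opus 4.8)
The plan is to reduce the statement to classical facts about the behaviour of positive self-similar Markov processes at the end of their lifetime, as developed by Lamperti. Recall from the discussion preceding the corollary that, under $\Q^{\pm}$, the process $X^*$ is a positive self-similar Markov process with characteristics $(\Phi_{\pm},\alpha)$, where $\xi^* = \log(\chi^*)$ is a L\'evy process with Laplace exponent $\Phi_{\pm}$, that $\Phi_{\pm}(0)=0$ (no killing), and that $\dot\Phi_{\pm}(0)=\dot\kappa(q_{\pm})$, which is $>0$ under $\Q^+$ and $<0$ under $\Q^-$. So by the strong law of large numbers for L\'evy processes, under $\Q^+$ we have $\xi^*(t)\to+\infty$ a.s.\ as $t\to\infty$, hence $\chi^*(t)=\e^{\xi^*(t)}\to+\infty$; symmetrically, under $\Q^-$, $\xi^*(t)\to-\infty$ a.s., hence $\chi^*(t)\to 0$.

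First I would treat case~(i), $\alpha>0$ under $\Q^+$. The lifetime of $X^*$ is $\zeta^*=\int_0^\infty \chi^*(r)^{-\alpha}\,\mathrm dr$. Since $\alpha>0$ and $\chi^*(r)\to+\infty$, the integrand $\chi^*(r)^{-\alpha}\to 0$, but this alone does not give finiteness of the integral; the quantitative input is that $\xi^*(t)/t\to \dot\kappa(q_+)>0$, so for any $0<c<\dot\kappa(q_+)$ there is a.s.\ a (random) time $t_0$ with $\xi^*(r)\ge cr$ for all $r\ge t_0$, whence $\chi^*(r)^{-\alpha}\le \e^{-\alpha c r}$ for $r\ge t_0$, and the integral converges. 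Therefore $\zeta^*<\infty$ a.s. Then $\lim_{t\to\zeta^*}X^*(t)=\lim_{s\to\infty}\chi^*(s)=+\infty$, because $\tau^*(t)\uparrow\infty$ as $t\uparrow\zeta^*$ (as noted, $\tau^*(t)<\infty$ iff $t<\zeta^*$) and $\chi^*$ has a limit at $+\infty$.

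Case~(ii), $\alpha<0$ under $\Q^-$, is the mirror image: now $\chi^*(r)\to 0$ and $-\alpha>0$, so $\chi^*(r)^{-\alpha}=\e^{-|\alpha|\,(-\xi^*(r))\cdot(-1)}$—more carefully, $\chi^*(r)^{-\alpha}=\e^{-\alpha\xi^*(r)}=\e^{-|\alpha|\,(-\xi^*(r))}$ where $-\xi^*(r)\to+\infty$ linearly at rate $-\dot\kappa(q_-)>0$, so again the same comparison argument gives $\zeta^*=\int_0^\infty \e^{-\alpha\xi^*(r)}\,\mathrm dr<\infty$ a.s., and $\lim_{t\to\zeta^*}X^*(t)=\lim_{s\to\infty}\chi^*(s)=0$. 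I do not expect any genuine obstacle here: the only point requiring a line of care is converting the a.s.\ linear growth of $\xi^*$ into integrability of the Lamperti clock (a routine comparison, as sketched), and noting that $\tau^*$ exhausts $[0,\infty)$ precisely on $[0,\zeta^*)$ so that the limit of $X^*$ at its lifetime is literally the limit of $\chi^*$ at $+\infty$. One could alternatively simply cite the standard Lamperti dichotomy (e.g.\ that a psMp with $\alpha>0$ built from a L\'evy process drifting to $+\infty$ reaches $+\infty$ continuously in finite time, and dually for $\alpha<0$ and drift to $-\infty$), which makes the corollary essentially immediate.
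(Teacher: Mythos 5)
Your proposal is correct and takes exactly the approach the paper intends: the paper simply states that the corollary follows from the law of large numbers applied to the L\'evy process $\xi^*$ (using $\Phi_\pm(0)=0$, no killing, and the sign of $\dot\Phi_\pm(0)=\dot\kappa(q_\pm)$), and you merely spell out the routine comparison showing that a.s.\ linear drift of $\xi^*$ gives a.s.\ exponential decay of $\chi^*(r)^{-\alpha}=\e^{-\alpha\xi^*(r)}$ and hence $\zeta^*<\infty$, together with the identification of $\lim_{t\to\zeta^*}X^*(t)$ with $\lim_{s\to\infty}\chi^*(s)$ via the Lamperti clock.
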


We now arrive at a key step in the proof of local explosion. For $\alpha <0$, we consider the self-similar growth-fragmentation starting from a single particle 
with a small initial size (that is, we work under $\pr_x$ with $x\ll 1$) and show that for every $0<a<a'$, we can find a time-interval $[t,t']$ such that the probability that 
${\mathbf X}$  has particles in $(a,a')$ for all times $r\in[t,t']$ remains bounded away from $0$ as $x$ tends to $0$. A similar property holds for $\alpha >0$, except that the initial size $x$ now tends to $+\infty$. Here is the precise statement.

\begin{lemma} \label{L24} Fix $0<a<a'$. There exist  $0<t<t'$ such that:

\noindent {\rm (i)} if $\alpha<0$, then
\[\underset{x\to 0^+}\liminf \; \pr_x [{\mathbf X}(r)\cap (a,a')\neq \emptyset \hbox{ for all }t\leq r \leq t']>0.\]

\noindent {\rm (ii)} if $\alpha>0$, then
\[\underset{x\to +\infty}\liminf \; \pr_x [{\mathbf X}(r)\cap (a,a')\neq \emptyset \hbox{ for all }t\leq r \leq t']>0.\]
\end{lemma}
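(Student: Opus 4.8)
The plan is to exploit the self-similarity together with the spinal decomposition provided by Lemma \ref{L2}. I will treat case (i), $\alpha<0$, in detail; case (ii) is entirely parallel with $\Q^+$ in place of $\Q^-$, starting sizes tending to $+\infty$ instead of $0^+$, and the roles of $0$ and $\infty$ swapped (using Corollary \ref{C1}(i) in place of \ref{C1}(ii)). First I would fix $0<a<a'$ and work under $\pr_x$ with $x$ small. The idea, following the heuristic from \cite{BW15} recalled in the introduction, is that along the selected line of descent the particle $X^*$ decreases to $0$ in finite time $\zeta^*$ (Corollary \ref{C1}(ii)); along the way it sheds, at the birth times of the spine's siblings $\mathbf{v}^*_n$, sub-growth-fragmentations $\boldsymbol{\chi}_n$ which, by Lemma \ref{L2}, are independent copies of the process started from $\chi_{\mathbf{v}^*_n}(\beta_{\mathbf{v}^*_n})$. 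Since $\chi^*$ is decreasing to $0$, infinitely many of these siblings have arbitrarily small initial mass, and each of them, being a self-similar growth-fragmentation with $\alpha<0$, has a positive chance of sending a particle through $(a,a')$ and keeping it there for a fixed span of time.

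The key steps, in order. (1) Change measure: express $\pr_x[\,\cdot\,]$ against $\Q^{-}$-expectations, or rather work directly under $\Q^{-}_x$ (the tilted law with initial size $x$), using that the event in question is increasing in the multiset $\mathbf{X}(r)$ and that $\Q^-_x$ is absolutely continuous with respect to $\pr_x$ on each $\mathcal{F}(t)$ with a density bounded below on favourable events — so it suffices to prove the $\liminf$ bound under $\Q^-_x$. (2) Reduce to a single sibling: it is enough to show that, with probability bounded away from $0$ as $x\to0^+$, there is \emph{some} sibling sub-growth-fragmentation $\boldsymbol{\chi}_n$, born at a spine-time $s_n$ with $\chi^*(s_n)$ small, whose Lamperti-transformed version (shifted by the appropriate birth time in the $\mathbf X$-clock, which is finite and controlled because $\zeta^*<\infty$ a.s. under $\Q^-$) puts a particle in $(a,a')$ throughout $[t,t']$. (3) Single-particle estimate: for a self-similar growth-fragmentation $\mathbf Y$ with $\alpha<0$ started from mass $y$, by the scaling property $\mathbf Y$ started from $y$ is $y$ times $\mathbf Y$ started from $1$ with time rescaled by $y^{-\alpha}\to\infty$; I would choose the target interval for the scaled process and use that the Eve particle (Remark \ref{rem:eve}) is a pssMp with index $\alpha<0$ which hits any neighbourhood of a prescribed level and dwells there for a positive amount of (rescaled) time with positive probability — this last input is exactly the kind of self-similar Markov process property flagged in the introduction. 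Because the rescaled time window $[t,t']$ becomes, in the unscaled process started from $y$, the window $[y^{\alpha}\cdot(\text{const})\,,\,\ldots]$, as $y\to0^+$ this window recedes to $0$; so I must instead pick the sibling whose initial mass is of the \emph{right} order, namely chosen so that after rescaling the target time falls in $[t,t']$. (4) Combine via the second Borel--Cantelli / independence: conditionally on the spine, the siblings $\boldsymbol{\chi}_n$ are independent with laws $\pr_{\chi_{\mathbf{v}^*_n}(\beta_{\mathbf{v}^*_n})}$, and along the spine infinitely many of them have initial masses in any prescribed shrinking range as $\chi^*\downarrow0$; a conditional Borel--Cantelli argument then gives that at least one succeeds, with conditional probability bounded below, hence unconditionally with probability bounded away from $0$ uniformly in $x$.

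The main obstacle I expect is the \textbf{time-synchronisation}: the event requires a particle to be in $(a,a')$ for \emph{every} $r$ in a \emph{fixed} interval $[t,t']$ chosen independently of $x$, whereas the natural particles produced by small siblings live, after the Lamperti time-change, in $x$-dependent and shrinking time windows. Making $[t,t']$ a genuine fixed interval forces a careful choice of \emph{which} sibling to use (one whose mass is tuned so that its own self-similar clock lands the target dwell-time inside $[t,t']$) and a uniform lower bound, over all small $x$, on both the probability that such a sibling exists along the spine and the probability that it succeeds — the latter requiring a dwell-time estimate for the pssMp Eve particle that is uniform in the starting configuration, together with control of the random birth time of that sibling in the $\mathbf X$-clock (finite a.s. because $\zeta^*<\infty$, but needing a quantitative tail bound to keep it below $t$). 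Wrapping these uniform estimates together, and checking that the tilting density does not degenerate, is where the real work lies; everything else is the standard spinal-decomposition bookkeeping.
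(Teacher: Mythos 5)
Your proposal uses $\Q^-$ for case (i), $\alpha<0$ (so the spine drifts to $0$ and one then tries to harvest the shed siblings), whereas the paper does the opposite: it tilts by $\Q^+$ when $\alpha<0$ and by $\Q^-$ when $\alpha>0$. This is not a matter of convention; it is the crux of the argument, and getting the sign backwards makes the proof circular.

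Under $\Q^+_x$ the selected particle $X^*$ is a positive self-similar Markov process with characteristics $(\Phi_+,\alpha)$, and $\dot\Phi_+(0^+)=\dot\kappa(q_+)>0$ while $-\alpha>0$. These are exactly the conditions under which $0^+$ is an \emph{entrance boundary}: as $x\to 0^+$, $X^*$ under $\Q^+_x$ converges weakly in Skorokhod's sense to a self-similar process $Y^+$ issued from $0^+$ with $Y^+(t)\to\infty$. One then picks $a<c<c'<a'$ and times $s<t<t'<s'$ with $\pr[\,Y^+(r)\in(c,c')\ \forall r\in(s,s')\,]>0$, and the Portmanteau theorem gives the uniform lower bound directly for the event $\{X^*(r)\in(a,a')\ \forall r\in[t,t']\}$ under $\Q^+_x$ — the \emph{spine itself} is the particle that spends time in $(a,a')$, and no sibling sub-fragmentation enters the picture. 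The transfer back to $\pr_x$ is a one-line Hölder bound $\Q^+_x[A]\le (\E_x[M_{q_+}(\infty)^p])^{1/p}\,\pr_x[A]^{1/p'}$, using $L^p$-convergence of the additive martingale and the fact that $\E_x[M_{q_+}(\infty)^p]$ is $x$-independent by scaling. Your step (1) is heuristically in this direction but the stated mechanism (``density bounded below on favourable events'') is not how the transfer works; Hölder goes in the right direction without any such lower bound.

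The deeper problem is that with $\Q^-$ your spine drifts downward. Starting already from a tiny $x$, every sibling $\mathbf{v}^*_n$ has mass $<x$, so you are trying to produce a particle in the fixed interval $(a,a')$ from initial masses that are \emph{even smaller} than $x$, and which shrink further as $x\to 0^+$. Your step (3) is then exactly the statement of Lemma \ref{L24}(i) itself, applied to the siblings: you need a lower bound, uniform in the small starting mass, on the probability that a growth-fragmentation started from a small mass puts a particle in $(a,a')$ during a fixed window. Invoking that here is circular. (This sibling-plus-Borel--Cantelli pattern is precisely the paper's proof of Theorem \ref{thprincipal} in Section 4, which is where Lemma \ref{L24} is consumed; it cannot be reused to prove Lemma \ref{L24}.) Finally, the ``time-synchronisation'' obstacle you identify at the end is genuinely fatal to the sibling-based route — after the Lamperti change, the successful window recedes as $y\to 0$ — but it is entirely absent from the paper's argument, because there the time window is inherited from the fixed limit process $Y^+$ and never shrinks.
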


\begin{proof} (i) We assume $\alpha <0$ and shall first establish the assertion of the statement when $\pr _x$ is replaced by its tilted version 
 $\Q^{+}_x$. The martingale we use for this tilting transformation is 
$$M_{q_+}(t)=x^{-q_+}\e^{-t\kappa(q_+)}
\sum_{{\mathbf u}\in{\mathcal U}} \chi^{q_+}_{\mathbf u}(t).
$$
Note that, by scaling, its distribution does not depend on $x$. 

We have seen that under $\Q^+_x$, $X^*$ is a self-similar Markov process with characteristics $(\Phi_{+},\alpha)$, and started from $x$. Since $\dot \Phi_+(0^+)=\dot \kappa(q_+)>0$ and $-\alpha >0$, it is known $0^+$ is an entrance boundary for this process. That is, as its starting point $x$ tends to $0$, $X^*$ converges weakly in the sense of Skorokhod to a self-similar process
$(Y^+(t),t\geq 0)$ started from $Y^+(0)=0$ with c\`adl\`ag paths and no positive jumps,  and such that $\lim_{t\to \infty} Y^+(t)=+\infty$ a.s. 
See \cite{CabCha} or \cite{CKPR}. In particular, given any $0<c<c'$, there exist two times $0<s<s'$  such that $\pr [\forall r\in(s,s'), Y^+(r)\in (c,c')]>0$. 
Picking now arbitrary $0<a<c<c'<a'$ and $s<t<t'<s'$, the Portmanteau theorem then yields
\begin{eqnarray*}
& &\underset{x\to 0^+}\liminf \; \Q^+_x [{\mathbf X}(r)\cap (a,a')\neq \emptyset \hbox{ for all }t\leq r \leq t']\\
&\geq & \underset{x\to 0^+}\liminf \; \Q^+_x [X^*(r)\in (a,a') \hbox{ for all }t\leq r \leq t']\\
&\geq & \pr[Y^+(r)\in (c,c') \hbox{ for all }s\leq r \leq s']>0.
\end{eqnarray*}

In order to establish a similar inequality under $\pr _x$ rather than $\Q^+_x$, we use the $L^p$ convergence of the martingale $M_q$ given in Proposition \ref{prop:choiceq} for a certain $p>1$. Indeed, letting $p'=(1-1/p)^{-1}$ be the H\"older conjugate of $p$, we have
\begin{align*}
& \Q^+_x [{\mathbf X}(r)\cap (a,a')\neq \emptyset \hbox{ for all }t\leq r \leq t'] \\
&= \mathbb{E}_x [M_{q^+}(\infty){\bf 1}_{{\mathbf X}(r)\cap (a,a')\neq \emptyset \hbox{ for all }t\leq r \leq t']}] \\
&\leq \big(\mathbb{E}_x[M_{q^+}(\infty)^p]\big)^{1/p} \big(\pr_x [{\mathbf X}(r)\cap (a,a')\neq \emptyset \hbox{ for all }t\leq r \leq t']\big)^{1/p'}.
\end{align*}
Recall that $\mathbb{E}_x[M_q(\infty)^p]$ does not depend on $x$. Since we have already proved that the left-hand side is bounded from below, our claim follows.

(ii) The case $\alpha>0$ is similar with simple modifications. That is, we first establish a bound under the tilted measure $\Q^-_x$, using the fact that since $\dot \Phi_-(0^+)=\dot \kappa(q_-)<0$ and $-\alpha <0$, $+\infty$ is an entrance boundary for the self-similar Markov process with characteristics $(\Phi_{-},\alpha)$ (this follows straightforwardly from \cite{CabCha} or \cite{CKPR} by considering the inverse of the self-similar Markov process). Then we deduce the analog result under $\pr_x$, using H\"older's inequality as in (i). 
\end{proof}

We have now all the ingredients needed for the proof of Theorem \ref{thprincipal}. 
\section{Proof of Theorem \ref{thprincipal}}

Our final preparatory assertion compares the distributions of the growth-fragmentation under $\pr$ and under the tilted probability measures.

\begin{prop}\label{prop:equiv} The distribution of ${\mathbf X}$ under $\pr$ and conditionally on no sudden death is equivalent to that under $\Q^{\pm}$. 
\end{prop}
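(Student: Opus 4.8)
The plan is to leverage the $L^p$-convergence of the additive martingales $M_{q_\pm}$, together with Lemma \ref{lem:poslim}, to conclude that $\pr$ (restricted to the event of no sudden death) and $\Q^\pm$ are mutually absolutely continuous. Recall that $\Q^{\pm}$ was constructed on an enlarged space carrying both the growth-fragmentation $\boldsymbol\chi$ and a selected line of descent $\mathbf u^*$; the first thing I would do is observe that we only care about the marginal law of $\boldsymbol\chi$ (equivalently, of the self-similar process $\mathbf X$ obtained from it by the Lamperti transform), so I would integrate out the spine. Under $\Q^{\pm,t}$, for $A \in \mathcal F(t)$ one has $\Q^{\pm,t}[A \times \mathcal U] = \e^{-t\kappa(q_\pm)}\E[\sum_{\mathbf u} \chi_{\mathbf u}^{q_\pm}(t)\,\mathbf 1_A] = \E[M_{q_\pm}(t)\mathbf 1_A]$, so the $\boldsymbol\chi$-marginal of $\Q^{\pm}$ restricted to $\mathcal F(t)$ has density $M_{q_\pm}(t)$ with respect to $\pr$.

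Next I would pass to the limit $t\to\infty$. Since $M_{q_\pm}$ converges in $L^p$ for some $p>1$ (Proposition \ref{prop:choiceq}), it is in particular uniformly integrable, so for any event $A$ in $\mathcal F(\infty) = \bigvee_t \mathcal F(t)$ we get $\Q^{\pm}[A] = \E[M_{q_\pm}(\infty)\mathbf 1_A]$ on the $\boldsymbol\chi$-marginal. Thus the law of $\boldsymbol\chi$ under $\Q^{\pm}$ is absolutely continuous with respect to $\pr$, with density $M_{q_\pm}(\infty)$. For the reverse absolute continuity, I invoke Lemma \ref{lem:poslim}: $M_{q_\pm}(\infty)>0$ $\pr$-a.s. on the event $\{\boldsymbol\chi\text{ does not become extinct}\}$, and $M_{q_\pm}(\infty)=0$ on the complementary event. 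Hence on the event of non-extinction (i.e.\ no sudden death, after transporting through the Lamperti time-change, which is a deterministic functional of $\boldsymbol\chi$), the density $M_{q_\pm}(\infty)$ is strictly positive and $\E[M_{q_\pm}(\infty)]=1$ by $L^1$-convergence, so $\pr(\,\cdot \mid \text{no sudden death})$ and $\Q^{\pm}$ are equivalent. Since $\mathbf X$ is a measurable functional of $\boldsymbol\chi$ and the event $\{\mathbf X \text{ dies suddenly}\} = \{\boldsymbol\chi \text{ goes extinct}\}$, the same equivalence holds for the law of $\mathbf X$.

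The only genuinely delicate point is the interplay between the $\sigma$-algebra on which the Radon--Nikodym identity is initially established ($\mathcal F(t)$, observing $\boldsymbol\chi$ up to time $t$) and the full $\sigma$-algebra generated by $\mathbf X$ for all time: one must check that the relevant events (in particular the non-extinction event and the events $\{\mathbf X(r)\cap(a,a')\neq\emptyset\}$ used later) are measurable with respect to $\mathcal F(\infty)$, which is immediate since the Lamperti transform and the bookkeeping of birth/death times are measurable functionals of $(\chi_{\mathbf u})_{\mathbf u\in\mathcal U}$. I expect this to be routine given the construction in Section \ref{subsec:construction}; the martingale-change-of-measure computation itself is exactly the one already carried out in the proof of Proposition \ref{P24}, so no new estimates are needed.
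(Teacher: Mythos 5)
Your proof is correct and follows essentially the same route as the paper: identify $M_{q_\pm}(\infty)$ as the density of the $\boldsymbol\chi$-marginal of $\Q^{\pm}$ with respect to $\pr$ via $L^1$-convergence of the additive martingale, then apply Lemma \ref{lem:poslim} to see that this density is positive precisely on the non-extinction event. The paper's proof is a terser version of the same argument; your additional remarks on integrating out the spine and on $\mathcal F(\infty)$-measurability are just the routine details the paper leaves implicit.
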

\begin{proof} Because the convergence $\lim_{t\to \infty} M_{q_{\pm}}(t)=M_{q_{\pm}}(\infty)$ holds in $L^1(\pr)$, 
it is immediately seen that the distribution of ${\mathbf X}$ under $\Q^{\pm}$ has density $M_{q_{\pm}}(\infty)$ with respect to $ \pr$. By Lemma \ref{lem:poslim}, this density is strictly positive on the event of no sudden death and is zero on the event of suddent death, and thus we have the wanted equivalence conditionally on no sudden death.
\end{proof}

In particular, in order to prove that the conclusion in Theorem \ref{thprincipal} holds a.s. under the conditional probability $\pr$ given that the self-similar growth-fragmentation ${\mathbf X}$ does not suddenly die, it is enough to show that the same assertion holds $\Q^{\pm}$-a.s.  It is more convenient for us to work under $\Q^-$ when $\alpha <0$, and under $\Q^+$ when $\alpha >0$. 

Specifically, assume first that $\alpha <0$. Recall  Lemma 
\ref{L2} and the notation there, and in particular that ${\mathbf v}^*_n$ is the $n$-th sibling particle of the selected line of descent. Since its size at birth cannot exceed the size of the selected particle $\chi^*=\e^{\xi^*}$ immediately before the jump of $\chi^*$ at which ${\mathbf v}^*_n$ is born, and since $\lim_{t\to \infty}\chi^*(t) =0$ a.s. under $\Q^-$, the size at birth of  the particle  ${\mathbf v}^*_n$ converges to $0$ as $n\to \infty$, $\Q^-$-a.s. Let us write ${\mathbf X}_n$ for the Lamperti transform of $\boldsymbol{\chi}_n$, the sub-growth-fragmentation generated by ${\mathbf v}^*_n$, and recall from  Lemma \ref{L24}, that we can find  $\veps>0$  and $x_{\veps}>0$ sufficiently small such that 
\[ \pr_x \big[{\mathbf X}(r)\cap (a,a')\neq \emptyset \hbox{ for all }t\leq r \leq t'\big]>\veps \qquad \hbox{for all }x<x_{\veps}.\]
Combining these observations with the spine decomposition under $\Q^-$ stated in Lemma \ref{L2}, and applying the Borel-Cantelli lemma, we deduce that 
 $\Q^-$-a.s. there are infinitely many integers $n$ such that ${\mathbf X}_n(r)\cap (a,a')\neq \emptyset \hbox{ for all }t\leq r \leq t'$. But the time $b_{{\mathbf v}^*_n}$ at which the particle ${\mathbf v}^*_n$ is born in the self-similar growth-fragmentation ${\mathbf X}$ converges to the lifetime $\zeta^*$ of the selected particle $X^*$,
 which is finite $\Q^-$-a.s. To complete the proof, we only consider siblings of the selected line of descent which are born at times $b_{{\mathbf v}^*_n}>\zeta^*-(t'-t)$ (a condition which holds whenever $n$ is sufficiently large), we conclude that  ${\bf X}(\zeta^*+t)$ possesses infinitely many elements in $(a,a')$, a.s. 
 
 This proves Theorem \ref{thprincipal} in the case $\alpha <0$; the proof for $\alpha >0$ is the same up to obvious modifications (in particular, one works under $\Q^+$ and the selected particle now converges to $+\infty$ at its lifetime). \qed

\section*{Acknowledgments} We would like to thank Benjamin Dadoun and Alex Watson for discussions and suggestions.

\bibliographystyle{siam}
\bibliography{frag}
\end{document}